\newcommand{\CA}{{\mathcal A}}
\newcommand{\CB}{{\mathcal B}}
\newcommand{\CC}{{\mathcal C}}
\newcommand{\CF}{{\mathcal F}}
\newcommand{\CG}{{\mathcal G}}
\newcommand{\CM}{{\mathcal M}}
\newcommand{\CP}{{\mathcal P}}
\newcommand{\CQ}{{\mathcal Q}}
\newcommand{\baton}[1]{{\mathbb #1}}
\newcommand{\N}{{\baton N}}
\newcommand{\Z}{{\baton Z}}
\newcommand{\E}{{\baton E}}
\newcommand{\one}{\mbox{\rm 1 \hspace{-.58em}l}}
\DeclareMathOperator{\supp}{Supp}
\newcommand{\egaldef}{\stackrel{\text{def}}{=}}
\DeclareMathOperator*{\Ttimes}{\times}
\newcommand{\ttimes}{\displaystyle \Ttimes}
\newtheorem{proposition}{Proposition}
\newtheorem{corollary}{Corollary}
\newtheorem{lemma}{Lemma}
\newtheorem{definition}{Definition}
\newtheorem*{VP}{Variational Principle}
\newtheorem*{MT}{Martingale Theorem}
\newtheorem*{PF}{Pinsker Formula}
\theoremstyle{remark}
\newtheorem{remark}{Remark}
\begin{document}

\title{ Asymptotic pairs in positive-entropy systems.}

\author{F. Blanchard \and  B. Host\and  S. Ruette}

\thanks{Ergod. Th. \& Dynam. Syst., {\bf 22}, 671-686, 2002.}
\address{Fran\c cois Blanchard, Institut de Math\'ematiques de Luminy -
CNRS UPR 9016 - 163, avenue de Luminy, case 907 -
13288 Marseille cedex 9 - France}
\email{blanchar@iml.univ-mrs.fr}
\address{Bernard Host, Universit\'e de Marne la Vall\'ee -
Cit\'e Descartes, 5, boulevard Descartes -
Champs sur Marne, 77454 Marne-la-Vall\'ee Cedex 2 - France}
\email{host@math.univ-mlv.fr}
\address{Sylvie Ruette, Institut de Math\'ematiques de Luminy -
CNRS UPR 9016 - 163, avenue de Luminy, case 907 -
13288 Marseille cedex 9 - France}
\email{ruette@iml.univ-mrs.fr}

\begin{abstract}
We show that in a topological dynamical system $(X,T)$ of positive
entropy there exist proper (positively) asymptotic pairs, that is, pairs
$(x,y)$
such that $x\not= y$ and $\lim_{n\to +\infty} d(T^n x,T^n y)=0$.  More
precisely we consider a
$T$-ergodic measure $\mu$ of positive entropy and prove that the
set of points that belong to a proper asymptotic pair is of measure
$1$.  When $T$ is invertible, the stable classes (i.e., the equivalence
classes for the asymptotic equivalence) are not stable under $T^{-1}$: for
$\mu$-almost every $x$ there are uncountably many $y$ that are asymptotic with
$x$ and such that $(x,y)$ is a Li-Yorke pair with respect to $T^{-1}$.  We also
show that asymptotic pairs are dense in the set of topological entropy pairs.
\end{abstract}

\maketitle

\section{Introduction}
              \label{sec:intro}

In this article a \emph{topological dynamical system} is
a compact metric space $X$ endowed with a homeomorphism $T:X\to X$, except in
subsection 3.3 where we drop the assumption that $T$ is invertible; the
distance
on $X$ is denoted by $d$.

Classically in Topological Dynamics one considers the asymptotic behaviour
of pairs of points. In this article, even when the systems considered are
invertible, the definitions of asymptoticity, proximality and Li-Yorke pairs
that we use are those fitted to an $\N$-action. A pair $(x,y)\in
X\times X$ is said to be \emph{proximal} if
$\liminf_{n\to+\infty}d(T^nx,T^ny)=0$, and $(x,y)$ is called an
\emph{asymptotic
pair} if $\lim_{n\to+\infty}d(T^nx,T^ny)=0$; the set of asymptotic pairs is
denoted by ${\mathbf A}$. An asymptotic pair $(x,y)$ with $x\neq y$ is said
to be
\emph{proper}. Asymptoticity is an equivalence relation; the
equivalence class of a point is called its
\emph{stable class}. We call a proximal pair
that is not asymptotic a \emph{Li-Yorke pair}: in 1975 Li and Yorke introduced
such pairs in a tentative definition of chaos \cite{LY}.

It is proven in \cite{BGKM} that positive entropy implies the existence
of a topologically `big' set of Li-Yorke pairs.
Here we prove by ergodic methods that in any topological dynamical system with
positive topological entropy there is a measure-theoretically
`rather big' set of proper asymptotic pairs; this is obvious for a
symbolic system but not in general. The set of
asymptotic pairs of any topological dynamical system has
been shown to be first category in~\cite{HY2}: it is a small set, but not too
small according to the present result. We also show
that a `rather big' set of $T$-asymptotic pairs are Li-Yorke under the
action of $T^{-1}$.

In~\cite{HY1} Huang and Ye construct a completely scrambled system, that is
to say, a dynamical system $(X,T)$ such that all proper pairs in $(X\times
X)$ are Li-Yorke.  They ask whether
such a system may have positive entropy.  That it may not is a direct
consequence of our Proposition 1. This statement formally generalizes a
previous
result of Weiss~\cite{We}, showing that any system $(X,T)$ such that
$(X\times X,T\times T)$ is recurrent has entropy $0$; recurrence of
$(X\times X,T\times T)$ means that any pair $(x,y)$, $x\ne y$, comes back
arbitrarily close to itself under the action of powers of $T$, which implies
that it cannot be asymptotic.

Then we study the behaviour of $T$-asymptotic pairs under $T^{-1}$.
Anosov diffeomorphisms on a manifold have stable and unstable
foliations; points belonging to the same stable foliation are
asymptotic under $T$ and tend to diverge under $T^{-1}$, while pairs
belonging to the unstable foliation behave the opposite way.  Our
results show that any positive-entropy system retains a faint flavour
of this situation:
there is a universal $\delta>0$ such that
outside a `small' set  the stable class of $x$ is non-empty and contains
an uncountable set of points $y$ such that
$\limsup_{n\to+\infty} d(T^{-n}x,T^{-n}y)\geq \delta$.

We also obtain a result about entropy pairs~\cite{BHM}: the set of
asymptotic pairs ${\mathbf A}$ is dense in the set of  entropy pairs $E(X,T)$.
The proof relies on two facts: that
the union of the sets of $\mu$-entropy pairs for all ergodic measures
$\mu$ is dense in the set of topological entropy pairs~\cite{BGH}, and the
characterization of the set $E_\mu(X,T)$ of $\mu$-entropy pairs as the  support
of some measure on $X\times X$~\cite{G}.

The article is organized as follows.  Section~\ref{sec:back} contains
some background in Ergodic Theory, in particular the old but not very
familiar definition of an
\emph{excellent partition}. In Section~\ref{sec:existence} using an
ad-hoc  excellent partition  we show that every system of positive entropy
admits `many' asymptotic pairs, and that this is also true for
non-invertible systems. In the next section, after recalling the
definition of the relative independent square of a measure, we use
this notion to show that asymptotic pairs are dense in the set of entropy
pairs.
In Section~\ref{sec:Li-Yorke}, we show that a system of positive
entropy has  `many' pairs that are asymptotic for $T$ and Li-Yorke
for $T^{-1}$. In the last section we show that the sets constructed
above are uncountable.

Some results are stated several times in increasingly
strong form; Propositions~\ref{prop:bizarre} and ~\ref{prop:ouf} are strongest.
We chose this organization in order to avoid a long preliminary
section containing all the required background.
Most tools are introduced just before the statements that require
them for their proofs.

A final remark about the methods. It is not very satisfactory to prove a purely
topological result -- the existence of many asymptotic pairs in any
positive-entropy topological dynamical system -- in a purely ergodic way.
Proving
it topologically is a good challenge. On the other hand Ergodic Theory is a
powerful tool; it is not the first time that it demonstrates its strength
in a neighbouring field. Here it also permits to prove results that are
probabilistic in nature.

We are grateful to X.D. Ye for providing the initial motivation, to
W.  Huang and him for several valuable observations and to S. Kolyada for
various interesting remarks. The referee made significant remarks
and corrected many English mistakes.

\section{Background}
              \label{sec:back}

Here are some classical definitions and results from Ergodic Theory, and
some technical Lemmas that will be needed in the sequel.

A measure-theoretic dynamical system $(X,\CA, T,\mu)$
is a Lebesgue probability space $(X,\CA,\mu)$ endowed with a
measurable transformation $T\colon X\to X$ which preserves $\mu$.
In this article unless
stated otherwise $T$ is assumed to be one-to-one and bi-measurable.
The $\sigma$-algebra $\CA$ is assumed to be complete for $\mu$. 
All measures are assumed to be probability measures;
since quasi-invariant measures are not considered in this article,
an ergodic measure is always assumed to be invariant. 

\subsection{Partitions}
                  \label{subsec:partitions}

All partitions of $X$ are assumed to consist
of atoms belonging to the $\sigma$-algebra $\CA$.
Given a partition $\CP$ of $X$ and $x\in X$, denote by $\CP(x)$
the atom of $\CP$ containing $x$.

If $(\CP_i;i\in I)$ is a
countable  family of finite partitions, the partition
$\CP=\bigvee_{i\in I}\CP_i$ is called a \emph{measurable
partition}~\cite{P1}.
The sets $A\in\CA$ which are union of atoms of $\CP$ form a
sub-$\sigma$-algebra of $\CA$ denoted by $\sigma(\CP)$ or $\CP$
if there is no ambiguity.
Every sub-$\sigma$-algebra
of $\CA$ coincides with a $\sigma$-algebra constructed in this way
outside a set of measure $0$.

A sub-$\sigma$-algebra $\CF$
of $\CA$ which is $T$-invariant, that is, $T^{-1}\CF=\CF$, is called a
\emph{factor}. Equivalently, a factor is given by a measure-theoretical system
$(Y,\CB,S,\nu)$ and a measurable map $\varphi\colon X\to Y$ such that
$\varphi\circ T=S\circ \varphi$; the corresponding $T$-invariant
sub-$\sigma$-algebra of $\CA$ is $\varphi^{-1} \CB$.

  Given a measurable partition $\CP$, put
$\CP^-= \bigvee_{n=1}^{\infty} T^{-n}\CP$ and $\CP^T=
\bigvee_{n=-\infty}^{+\infty} T^{-n}\CP$.
  Define in the same way $\CF^-$ and
$\CF^T$ if $\CF$ is a sub-$\sigma$-algebra of $\CA$.
The measurable partition $\CP$ (resp. the sub-$\sigma$-algebra $\CF$) is
called \emph{generating} if $\sigma(\CP^T)$  (resp. $\CF^T$) is equal to
$\CA$.

\subsection{Entropy}
             \label{subsec:entropy}

For the definition of the conditional entropy $H(\CP \mid \CF)$
of a finite measurable partition  $\CP$ with respect to the
sub-$\sigma$-algebra $\CF$, of the entropy $h_\mu(\CP,T)=H(\CP \mid \CP^-)$
of  a partition $\CP$ with respect to $T$ and of the entropy
$h_{\mu}(X,T)$, refer to~\cite{P1}, \cite{P2}, \cite{Wa}.

The \emph{Pinsker factor } $\Pi_{\mu}$ of $(X,\CA, T,\mu)$ is the maximal
factor with entropy $0$; a finite partition $\CP$ is measurable with
respect to $\Pi_{\mu}$ if and only if
$h_{\mu}({\CP},T)=0$.

We do not give the proofs of the next two results; they can be found in
\cite{P1}.

\begin{lemma}\label{lemma:generating}
If $\CF$ is a generating sub-$\sigma$-algebra then
$\displaystyle \Pi_{\mu} \subset\CF^-$.
\end{lemma}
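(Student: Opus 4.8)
The plan is to recognise the statement as the ``easy half'' of the Rokhlin--Sinai theorem and to reduce it to a tail computation for a generator built from $\CF$. First I would reduce to finite partitions: since a sub-$\sigma$-algebra is determined modulo $\mu$-null sets by the finite partitions measurable with respect to it, and since a finite partition $\CP$ lies in $\Pi_\mu$ exactly when $h_\mu(\CP,T)=0$ (the characterization recalled above), the inclusion $\Pi_\mu\subseteq\CF^-$ is equivalent to the assertion that every finite partition $\CP$ with $h_\mu(\CP,T)=0$ satisfies $H(\CP\mid\CF^-)=0$. So I fix such a $\CP$.

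Second, I would extract the one genuinely elementary consequence of zero entropy. By definition $h_\mu(\CP,T)=H(\CP\mid\CP^-)=0$, so $\CP$ is measurable with respect to $\CP^-=\bigvee_{n\ge 1}T^{-n}\CP$. Applying $T^{-1}$ repeatedly absorbs the leading atom, giving $\CP^-=\bigvee_{n\ge N}T^{-n}\CP$ for every $N\ge 1$; hence $\CP$ is measurable with respect to $\bigvee_{n\ge N}T^{-n}\CP$ for all $N$, i.e.\ with respect to its own arbitrarily-remote tail.

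Third, I would transport this from the tail of $\CP$ to the tail of the generator. Put $\CG\egaldef\CF\vee\CF^-=\bigvee_{n\ge 0}T^{-n}\CF$. Because $\CF$ is generating, $\CG$ is \emph{exhaustive}: $T^{-1}\CG=\CF^-\subseteq\CG$ and $\bigvee_{M\ge 0}T^{M}\CG=\bigvee_{k\in\Z}T^{-k}\CF=\CA$. The algebras $T^{-N}\CG=\bigvee_{n\ge N}T^{-n}\CF$ decrease to $\CG_\infty\egaldef\bigcap_{N\ge 0}T^{-N}\CG$, and taking $N=1$ shows $\CG_\infty\subseteq T^{-1}\CG=\CF^-$. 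Thus it suffices to prove $\CP\subseteq\CG_\infty$, which is exactly the substance of the Rokhlin--Sinai theorem (the Pinsker algebra of a generating $\sigma$-algebra is its remote tail). I would approximate $\CP$ in the Rokhlin metric by partitions measurable over a finite window $\bigvee_{\lvert k\rvert\le M}T^{-k}\CF$ (Martingale Theorem), push the remote iterates $T^{-n}\CP$ into $T^{-N}\CG$, and use the Pinsker Formula to certify that the entropy lost in the approximation is controlled by $h_\mu(\CP,T)=0$; the Martingale Theorem then gives $H(\CP\mid\CG_\infty)=\lim_N H(\CP\mid T^{-N}\CG)=0$.

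The main obstacle is precisely this last transfer. The reduction to finite partitions and the self-tail measurability of the first two steps are formal, and the inclusion $\CG_\infty\subseteq\CF^-$ is pure $\sigma$-algebra bookkeeping; what is not formal is passing from ``$\CP$ is measurable over \emph{its own} tail'' to ``$\CP$ is measurable over the generator's tail $\CG_\infty$''. The delicate point is that a crude truncation loses control of the near-future terms $T^{-1}\CP,\dots,T^{-(N-1)}\CP$, so one cannot simply discard them; this is exactly where the Pinsker Formula, relating the per-step entropy of the generator to conditional entropies over the tail, must do the real work, and it is the crux of the whole argument.
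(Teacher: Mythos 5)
There is a genuine gap, and you in effect name it yourself: all of the content of the lemma is concentrated in your third step, which is never carried out. Your first two steps are correct but formal --- the reduction to finite partitions ($\Pi_\mu\subseteq\CF^-$ iff $H(\CP\mid\CF^-)=0$ for every finite $\CP$ with $h_\mu(\CP,T)=0$), the observation that $H(\CP\mid\CP^-)=0$ forces $\CP^-=\bigvee_{n\ge N}T^{-n}\CP$ for all $N$, and the bookkeeping $T^{-1}\CG=\CF^-\subseteq\CG$, $\bigvee_{M\ge 0}T^M\CG=\CA$, $\CG_\infty\subseteq\CF^-$ are all fine. But you then reduce the lemma to the assertion that every finite zero-entropy partition is measurable with respect to $\CG_\infty$, declare this to be ``exactly the substance of the Rokhlin--Sinai theorem,'' and sketch it in one sentence. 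That assertion is not an auxiliary tool: since $\Pi_\mu$ is $T$-invariant, it is equivalent to the lemma being proved, so invoking Rokhlin--Sinai at this point is circular unless you prove it. The paper itself does not supply a proof either --- it explicitly defers to Parry \cite{P1} --- precisely because this step is a substantive theorem and not a formal manipulation; a blind proof therefore has to do this work, and yours does not.

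Moreover, the one-sentence sketch you offer for the crux cannot work as described. Since $T^{-1}\CG\subseteq\CG$, the algebras $T^{-N}\CG$ decrease as $N$ grows, so $H(\CP\mid T^{-N}\CG)$ \emph{increases} in $N$, and the limit $H(\CP\mid\CG_\infty)=\lim_N H(\CP\mid T^{-N}\CG)$ is a supremum: it vanishes if and only if $H(\CP\mid T^{-N}\CG)=0$ for every single $N$. So the Martingale Theorem buys nothing here; what must be proved is the exact identity $H(\CP\mid\CF^-)=0$ (the case $N=1$), and exact measurability can never follow from approximating $\CP$ in the Rokhlin metric by partitions measurable over finite windows $\bigvee_{|k|\le M}T^{-k}\CF$ --- approximation only gives $H(\CP\mid T^M\CG)\to 0$ as $M\to+\infty$, which is the trivial (increasing) direction of the martingale argument. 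What is missing is an entropy argument in the style of the paper's Lemma~\ref{lemma:technical} and Lemma~\ref{lemma:excellent}: writing $H(\CP\mid T^{-N}\CG)$ as the telescoping sum of the increments $H(\CP\mid T^{k-1}\CG)-H(\CP\mid T^{k}\CG)$, one must show that each increment vanishes when $h_\mu(\CP,T)=0$, and it is exactly there that the Pinsker Formula has to be deployed. Your second step (self-tail measurability of $\CP$) is never actually used and cannot substitute for this, since exploiting it would require exact, not approximate, measurability of $\CP$ with respect to $T^N\CF^-$ for some finite $N$. As written, the proposal is a correct reduction followed by an acknowledged placeholder where the proof should be.
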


\begin{PF}
For any finite partitions $\CP$ and $\CQ$ one has
\begin{equation}
        \label{eq:pinsker}
H({\CQ}\vee\CP\mid  \CQ^-\vee \CP^-) -H(\CP\mid\CP^-)
= H(\CQ\mid\CQ^-\vee \CP^T)\ .
\end{equation}
\end{PF}

The next technical Lemma compares the entropy of a partition
with the conditional entropy of this partition with respect
to the past of another.

\begin{lemma}
               \label{lemma:technical}
Let $(X,\CA, T,\mu)$ be a
measure-theoretic dynamical system, and let
$\CP_1\prec\CP_2\prec \dots\prec\CP_k$ be
finite partitions. Then
\begin{equation}
      \label{eq:diff-entropy}
H(\CP_1\mid \CP_1^-) - H(\CP_1\mid \CP_2^-)=
H(\CP_2\mid \CP_1\vee \CP_2^-) - H(\CP_2\mid
\CP_1^T\vee \CP_2^-)
\end{equation}
and
\begin{equation}
     \label{eq:sum-entropy}
H(\CP_1\mid \CP_1^-) - H(\CP_1\mid \CP_k^-)
\leq \sum_{i=1}^{k-1}
\Bigl( H(\CP_i\mid \CP_i^-) - H(\CP_i\mid\CP_{i+1}^-)\Bigr)\ .
\end{equation}
\end{lemma}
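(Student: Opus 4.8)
The plan is to derive both statements from the Pinsker Formula \eqref{eq:pinsker} together with the two elementary properties of conditional entropy that I will use repeatedly: the chain rule $H(\CP\vee\CQ\mid\CF)=H(\CP\mid\CF)+H(\CQ\mid\CP\vee\CF)$, and the monotonicity $H(\CP\mid\CF)\geq H(\CP\mid\CG)$ whenever $\CF\subset\CG$. The hypothesis $\CP_1\prec\CP_2\prec\dots\prec\CP_k$ enters in two guises: it collapses joins, since $\CP_i\vee\CP_j=\CP_{\max(i,j)}$, and it nests the pasts, $\CP_1^-\subset\CP_2^-\subset\dots\subset\CP_k^-$.

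For the equality \eqref{eq:diff-entropy} I would produce two expressions for $H(\CP_2\mid\CP_2^-)$ and eliminate it between them. First, applying the chain rule to $\CP_1\vee\CP_2=\CP_2$ conditioned on $\CP_2^-$ gives $H(\CP_2\mid\CP_2^-)=H(\CP_1\mid\CP_2^-)+H(\CP_2\mid\CP_1\vee\CP_2^-)$. Second, feeding $\CP=\CP_1$ and $\CQ=\CP_2$ into \eqref{eq:pinsker} and using $\CP_2\vee\CP_1=\CP_2$ and $\CP_2^-\vee\CP_1^-=\CP_2^-$ collapses that formula to $H(\CP_2\mid\CP_2^-)-H(\CP_1\mid\CP_1^-)=H(\CP_2\mid\CP_1^T\vee\CP_2^-)$. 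Equating the two resulting expressions for $H(\CP_2\mid\CP_2^-)$ and rearranging yields \eqref{eq:diff-entropy} exactly. This part is essentially bookkeeping once the right substitution into the Pinsker Formula is identified.

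For the inequality \eqref{eq:sum-entropy} I would first telescope the left-hand side along the nested pasts, writing $H(\CP_1\mid\CP_1^-)-H(\CP_1\mid\CP_k^-)=\sum_{i=1}^{k-1}\bigl(H(\CP_1\mid\CP_i^-)-H(\CP_1\mid\CP_{i+1}^-)\bigr)$. It then suffices to compare the two sums term by term, i.e. to show for each $i$ that $H(\CP_1\mid\CP_i^-)-H(\CP_1\mid\CP_{i+1}^-)\leq H(\CP_i\mid\CP_i^-)-H(\CP_i\mid\CP_{i+1}^-)$. This reduces to a general monotonicity statement: if $\CP\prec\CQ$ and $\CG\subset\CH$, then the entropy drop $H(\CP\mid\CG)-H(\CP\mid\CH)$ does not exceed $H(\CQ\mid\CG)-H(\CQ\mid\CH)$. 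I would prove it by expanding both $H(\CQ\mid\CG)$ and $H(\CQ\mid\CH)$ with the chain rule (using $\CP\vee\CQ=\CQ$), subtracting, and observing that the difference of the two drops equals $H(\CQ\mid\CP\vee\CG)-H(\CQ\mid\CP\vee\CH)$, which is nonnegative because $\CP\vee\CG\subset\CP\vee\CH$. Applying this with $\CP=\CP_1$, $\CQ=\CP_i$, $\CG=\CP_i^-$, $\CH=\CP_{i+1}^-$ bounds each term, and summing finishes the proof.

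The only genuine subtlety, and the step I would watch most carefully, is the term-by-term comparison in \eqref{eq:sum-entropy}: one must telescope the left-hand side against the \emph{fixed} partition $\CP_1$ while the past varies, and only then trade $\CP_1$ for the larger $\CP_i$ in each individual term. The slack in the inequality is precisely the nonnegative quantity $H(\CP_i\mid\CP_1\vee\CP_i^-)-H(\CP_i\mid\CP_1\vee\CP_{i+1}^-)$. Beyond this the arguments are formal manipulations with conditional entropy; since the pasts are genuine (infinite) sub-$\sigma$-algebras rather than finite partitions, I would simply take care to invoke the chain rule and monotonicity in their $\sigma$-algebra versions, exactly as the Pinsker Formula is already stated.
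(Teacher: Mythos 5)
Your proof is correct. For the identity \eqref{eq:diff-entropy} your argument coincides with the paper's: the paper also expands $H(\CP_2\mid\CP_2^-)$ once via the Pinsker Formula and once via the chain rule and subtracts (it does this for general $k$ at once and then reads off the $k=2$ case). For the inequality \eqref{eq:sum-entropy}, however, your route is genuinely different. The paper pushes the same double expansion of $H(\CP_k\mid\CP_k^-)$ through all $k$ partitions, obtaining the exact identity
$$
H(\CP_1\mid \CP_1^-) - H(\CP_1\mid \CP_k^-)=
\sum_{i=2}^k \bigl( H(\CP_i\mid \CP_k^-\vee\CP_{i-1})
- H(\CP_i\mid \CP_i^-\vee \CP_{i-1}^T) \bigr)\ ,
$$
then applies monotonicity ($\CP_i^-\prec\CP_k^-$) to each term and finally uses \eqref{eq:diff-entropy} \emph{backwards}, with $\CP_{i-1},\CP_i$ in place of $\CP_1,\CP_2$, to convert each summand into $H(\CP_{i-1}\mid\CP_{i-1}^-)-H(\CP_{i-1}\mid\CP_i^-)$. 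You instead telescope the left-hand side against the fixed partition $\CP_1$ along the nested pasts and compare term by term via the static lemma that, for $\CP\prec\CQ$ and $\CG\subset\CH$, one has $H(\CP\mid\CG)-H(\CP\mid\CH)\leq H(\CQ\mid\CG)-H(\CQ\mid\CH)$; your identification of the slack as $H(\CP_i\mid\CP_1\vee\CP_i^-)-H(\CP_i\mid\CP_1\vee\CP_{i+1}^-)\geq 0$ is exactly right. What your approach buys: the inequality is decoupled from the dynamics entirely --- no Pinsker Formula, no $\CP^T$, no $T$-invariance --- so it holds verbatim with the pasts replaced by any increasing sequence of sub-$\sigma$-algebras, and the two halves of the lemma become independent of one another. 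What the paper's approach buys: a single unified computation delivering both statements, which makes visible that \eqref{eq:sum-entropy} is just a relaxation (by one application of monotonicity per term) of an exact identity, and which keeps all the bookkeeping inside the Pinsker-Formula framework already set up for Lemma~\ref{lemma:excellent}.
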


\begin{proof}
Obviously $\CP_k= \CP_1\vee\dots \vee \CP_k$.
A repeated use of the Pinsker Formula~\eqref{eq:pinsker}
yields
$$
H(\CP_k \mid \CP_k^-) =
H(\CP_1\mid \CP_1^-) + H(\CP_2\mid\CP_2^-\vee \CP_1^T)
+\dots+  H(\CP_k\mid \CP_k^-\vee \CP_{k-1}^T)\ ;
$$
also, using the elementary formula for conditional
entropy of partitions inductively one gets
$$
H(\CP_k \mid \CP_k^-) = H(\CP_1\mid \CP_k^-) +
H(\CP_2\mid \CP_k^-\vee \CP_1)+\dots+
H(\CP_k\mid \CP_k^-\vee \CP_{k-1})\ .
$$
Combining these two equalities one obtains
$$
H(\CP_1\mid \CP_1^-) - H(\CP_1\mid  \CP_k^-)=
\sum_{i=2}^k \bigl( H(\CP_i\mid \CP_k^-\vee\CP_{i-1})
- H(\CP_i\mid \CP_i^-\vee \CP_{i-1}^T) \bigr)\ .
$$

For $k=2$ this is \eqref{eq:diff-entropy}.

For $k>2$, remark that $\CP_i^-\prec \CP_k^-$ for $i\leq k$ so that
$H(\CP_i\mid\CP_k^-\vee \CP_{i-1})
\leq H(\CP_i\mid \CP_i^-\vee \CP_{i-1})$,
  hence
$$
H(\CP _1\mid \CP _1^-) - H(\CP _1\mid \CP _k^-)
  \leq   \sum_{i=2}^{k}
  \bigl(H(\CP _i\mid \CP _i^-\vee \CP_{i-1})
        - H(\CP_i\mid \CP_i^-\vee \CP _{i-1}^T) \bigr)\ .
$$
Applying \eqref{eq:diff-entropy}
(with $\CP_{i-1}$ and $\CP_i$  in place of $\CP_1$ and $\CP_2$)
  to each term in the sum, the inequality above becomes
$$
H(\CP_1\mid \CP_1^-) - H(\CP_1\mid \CP_k^-)
\leq  \sum_{i=2}^{k} \bigl(
     H(\CP_{i-1}\mid \CP_{i-1}^-)-H(\CP_{i-1}\mid \CP_i^-)
\bigr)\,
$$
which is \eqref{eq:sum-entropy} up to a
change of index.
\end{proof}

\subsection{Excellent partitions}
                  \label{subsec:excellent}

For any measure-theoretic dynamical system $(X,\CA, T,\mu)$ there exists a
generating measurable partition
with the property that $\displaystyle\bigcap_{k=1}^\infty T^{-k}\CP ^-=
\Pi_{\mu}$. In the finite-entropy case any finite generating
partition has this property. The existence of such a partition in the
general case was proven by Rohlin and Sina\u{\i} and permitted to show that
the class of K-systems and the class of completely positive entropy systems
coincide \cite{RS}; they gave a construction from which the one in
Subsection \ref{subsec:construction} is derived.
The name ``excellent'' was coined by one of the present authors in a later
article.
\begin{definition}
                  \label{def:excellent}
Let $(X,\CA,T, \mu)$ be a measure-theoretic dynamical system.  A
measurable partition $\CP $ is said to be \emph{excellent} if it is
generating and there is an increasing sequence of finite measurable
partitions $(\CP_n)_{n\geq 1}$ such that $\CP_n \to \CP$ and
$H(\CP_n\mid \CP_n^-)- H(\CP_n \mid \CP^-) \to 0$ as $n\to \infty$.
\end{definition}

\begin{lemma}
     \label{lemma:excellent}~\cite{P1}
If $\CP $  is an excellent partition,   then
$\displaystyle\bigcap_{k=1}^\infty T^{-k}\CP ^-= \Pi_{\mu}$.
\end{lemma}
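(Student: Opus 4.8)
The plan is to establish the two inclusions for $\CP_\infty\egaldef\bigcap_{k=1}^\infty T^{-k}\CP^-$, after first recording that $\CP_\infty$ is a factor. Since the sequence $(T^{-k}\CP^-)_{k\geq1}$ is decreasing, dropping its first term changes nothing, so $T^{-1}\CP_\infty=\bigcap_{k\geq2}T^{-k}\CP^-=\CP_\infty$; thus $\CP_\infty$ is invariant under $T$ and $T^{-1}$, and in particular $\CP_\infty\subset T^{-1}\CP^-\subset\CP^-$, a containment I will use throughout.

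For $\Pi_\mu\subset\CP_\infty$ I only need that $\CP$ is generating: Lemma~\ref{lemma:generating} then gives $\Pi_\mu\subset\CP^-$, and since $\Pi_\mu$ is a factor, $\Pi_\mu=T^{-k}\Pi_\mu\subset T^{-k}\CP^-$ for every $k\geq1$, whence $\Pi_\mu\subset\CP_\infty$. The reverse inclusion $\CP_\infty\subset\Pi_\mu$ is the crux, and is where excellence is used. It suffices to show $h_\mu(\CR,T)=0$ for every \emph{finite} partition $\CR\prec\CP_\infty$, for then $\CR\prec\Pi_\mu$. Observe first that $\CR^T\subset\CP_\infty\subset\CP^-$, by the two-sided $T$-invariance of $\CP_\infty$. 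Applying the Pinsker Formula~\eqref{eq:pinsker} twice — once to the pair $(\CR,\CP_n)$ and once with the roles of $\CR$ and $\CP_n$ exchanged, using $(\CR\vee\CP_n)^-=\CR^-\vee\CP_n^-$ — produces
\[
h_\mu(\CP_n,T) + H(\CR\mid\CR^-\vee\CP_n^T) = h_\mu(\CR\vee\CP_n,T) = h_\mu(\CR,T) + H(\CP_n\mid\CP_n^-\vee\CR^T),
\]
so that $h_\mu(\CR,T)=\bigl(h_\mu(\CP_n,T)-H(\CP_n\mid\CP_n^-\vee\CR^T)\bigr)+H(\CR\mid\CR^-\vee\CP_n^T)$ for every $n$. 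All terms are finite, since $\CR$ and $\CP_n$ are finite partitions, so there is no $\infty-\infty$ obstruction; the task is to send both error terms to $0$ as $n\to\infty$.

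The two limits are treated separately. Because $\CP_n\to\CP$ and $\CP$ is generating, $\CP_n^T\nearrow\CA$, hence $\CR^-\vee\CP_n^T\nearrow\CA$; as $\CR\subset\CA$, continuity of conditional entropy along increasing $\sigma$-algebras (the Martingale Theorem) gives $H(\CR\mid\CR^-\vee\CP_n^T)\to H(\CR\mid\CA)=0$. For the other term I invoke $\CR^T\subset\CP^-$: then $\CP_n^-\subset\CP_n^-\vee\CR^T\subset\CP^-$, so $H(\CP_n\mid\CP^-)\leq H(\CP_n\mid\CP_n^-\vee\CR^T)\leq H(\CP_n\mid\CP_n^-)=h_\mu(\CP_n,T)$, and the excellence hypothesis $H(\CP_n\mid\CP_n^-)-H(\CP_n\mid\CP^-)\to0$ squeezes $h_\mu(\CP_n,T)-H(\CP_n\mid\CP_n^-\vee\CR^T)\to0$. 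Since $h_\mu(\CR,T)$ is independent of $n$, it must be $0$, giving $\CR\prec\Pi_\mu$ and hence $\CP_\infty\subset\Pi_\mu$, which completes the proof.

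I expect the main obstacle to be arranging the argument so that excellence is applied exactly where it bites: the decisive point is that $\CR^T$ lies inside $\CP^-$ while still sitting in the tail $\CP_\infty$, which is what permits the excellence squeeze to act on the conditioning $\CP_n^-\vee\CR^T$. Engineering the double use of the Pinsker Formula so that it isolates $h_\mu(\CR,T)$ with two separately controllable error terms is the one genuinely clever step; the remaining limits are routine martingale and monotonicity facts.
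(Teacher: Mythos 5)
Your proof is correct and follows essentially the same route as the paper's: both isolate $h_\mu(\CR,T)$ via a double application of the Pinsker Formula, kill one error term by the martingale limit $\CP_n^T\nearrow\CA$ (using that $\CP$ is generating), and kill the other by the excellence squeeze $H(\CP_n\mid\CP_n^-)-H(\CP_n\mid\CP^-)\to 0$, made possible by $\CR^T\subset\CP^-$. Your explicit verification that $\bigcap_{k\geq1}T^{-k}\CP^-$ is a two-sided invariant factor is a point the paper uses only implicitly, but it does not change the argument.
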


\begin{proof}
Let $\CQ$ be a  finite partition,  measurable  with respect to
$\bigcap_{k=1}^{\infty} T^{-k}\CP ^-$,
  and let the partitions $\CP _n$ be as in Definition~\ref{def:excellent}.
Applying the Pinsker formula twice one obtains
\begin{align*}
H(\CQ \mid \CQ^-)
   & = H(\CP_n\vee \CQ\mid \CP_n^- \vee \CQ ^-)
       - H(\CP_n\mid \CP_n^-\vee\CQ^T)\\
   & = H(\CP_n\mid \CP_n^-) +H(\CQ\mid\CP_n^T\vee \CQ^-)
       - H(\CP_n\mid \CP_n^-\vee\CQ ^T)\ .
\end{align*}
When $n$ goes to infinity $H(\CQ\mid\CP_n^T \vee \CQ ^-)$ goes to $0$,
since $\CP_n^T$ tends to $\CP^T=\CA$;
on the other hand we assumed
that $T^n\CQ$ is measurable with respect to $\CP^-$ for $n\in \Z$,
  so $\CP_n^-\vee\CQ^T$ is contained in $\CP^-$ and
$$
0\leq H(\CP_n\mid\CP_n^-) -  H(\CP_n\mid \CP_n^-\vee \CQ^T)
\leq  H(\CP_n \mid \CP_n^-) - H(\CP_n \mid \CP ^-)\ .
$$
By our assumption the majoration tends to
$0$.
Thus $H(\CQ\mid\CQ^-) =0$, which means that
$\CQ$ is coarser than the Pinsker $\sigma$-algebra.
As this is true for any finite partition $\CQ$ measurable with respect
to  $\bigcap_{k=1}^{\infty} T^{-k}\CP ^-$, one has
$\bigcap_{k=1}^{\infty}T^{-k}\CP ^-\subset\Pi_\mu$.

The reverse inclusion is due to the fact that $\CP $ is generating,
so that
$\Pi_{\mu} \subset \bigcap_{k=1}^{\infty}T^{-k}\CP ^-$ by Lemma
\ref{lemma:generating}.
\end{proof}

\section{existence of asymptotic pairs}
                                     \label{sec:existence}

Let $(X,T)$ be a topological dynamical system,
and let $\CB$ be the Borel $\sigma$-algebra of $X$. Given two topological
dynamical systems $(X,T)$ and $(Y,S)$ a continuous onto map
$\pi:(X,T)\to(Y,S)$ such that $\pi\circ T=S\circ \pi$ is called  a topological
factor map.

The
definitions of proximal, asymptotic and Li-Yorke pairs  are given
at the very beginning of the Introduction. Recall that $\mathbf A$ is the
set of
all asymptotic pairs in $X\times X$.
See~\cite{Wa} for the definition of topological entropy, and for
the

\begin{VP} The topological entropy $h(X,T)$ of the system $(X,T)$ is
equal to the supremum of the entropies $h_\mu(X,\CB,T,\mu)$ where $\mu$ ranges
over the set of ergodic $T$-invariant measures.
\end{VP}

\subsection{Construction of an excellent partition.}\label{subsec:construction}

The next Lemma establishes a connection
between asymptotic pairs and entropy. It is our main tool. It is based
on the construction of excellent partitions in~\cite{P1}.

\begin{lemma}
     \label{lemma:constr-excellent}
     Let $\mu$ be an ergodic measure on $X$.

$\imath$) The system $(X,\CB,T,\mu)$ admits an excellent partition $\CP$, such
that any pair of points belonging to the same atom of $\CP^-$ is
asymptotic.

$\imath\imath$) Moreover, if $h_\mu(X,T)>0$ then the $\sigma$-algebras
$\CP^-$ and
$\CB$ do not coincide up to sets of $\mu$-measure $0$.
\end{lemma}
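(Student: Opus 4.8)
The plan is to build $\CP$ by feeding fine partitions into the Rohlin--Sina\u{\i} construction of an excellent partition (cf. \cite{RS}, \cite{P1}) after pushing them far into the future, so that sharing a $\CP^-$-atom forces the forward orbits to track one another at finer and finer scales. Concretely, fix an increasing sequence of finite Borel partitions $\gamma_1\prec\gamma_2\prec\cdots$ whose meshes tend to $0$ and which generate $\CB$, and fix Borel representatives once and for all. I will construct an increasing sequence of finite partitions $\CP_1\prec\CP_2\prec\cdots$ with $\CP_{i+1}=\CP_i\vee T^{-m_{i+1}}\gamma_{i+1}$ for a rapidly increasing sequence of delays $m_i$, and set $\CP=\bigvee_i\CP_i$. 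Since the shifts of $\bigvee_i T^{-m_i}\gamma_i$ already recover every $\gamma_k$, the partition $\CP$ is generating.

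The asymptotic property is forced by the delays and is the design goal. Suppose $x$ and $y$ lie in the same atom of $\CP^-$, i.e. $\CP(T^n x)=\CP(T^n y)$ for every $n\ge 1$. Since $\CP\succ T^{-m_k}\gamma_k$, this gives $\gamma_k(T^{n+m_k}x)=\gamma_k(T^{n+m_k}y)$ for all $n\ge 1$ and all $k$; equivalently $\gamma_k(T^N x)=\gamma_k(T^N y)$ for every $N>m_k$. Given $\varepsilon>0$, choose $k$ with $\operatorname{mesh}(\gamma_k)<\varepsilon$; then $d(T^N x,T^N y)<\varepsilon$ for all $N>m_k$, so $d(T^N x,T^N y)\to 0$ and every pair in a common $\CP^-$-atom is asymptotic. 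Note that this argument is genuinely pointwise, which is why it yields \emph{all} such pairs and not merely almost all.

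The real work is to carry out the construction so that $\CP$ is \emph{excellent} in the sense of Definition~\ref{def:excellent}, and this is where I expect the main difficulty. Using the $\CP_i$ above as the approximating sequence, it suffices to show the defect $H(\CP_n\mid\CP_n^-)-H(\CP_n\mid\CP^-)\to 0$. Writing $\epsilon_i:=H(\CP_i\mid\CP_i^-)-H(\CP_i\mid\CP_{i+1}^-)\ge 0$ for the consecutive defects and letting $K\to\infty$ in the telescoping inequality~\eqref{eq:sum-entropy} of Lemma~\ref{lemma:technical} (applied to $\CP_n\prec\cdots\prec\CP_K$), together with the martingale convergence $H(\CP_n\mid\CP_K^-)\to H(\CP_n\mid\CP^-)$, gives $H(\CP_n\mid\CP_n^-)-H(\CP_n\mid\CP^-)\le\sum_{i\ge n}\epsilon_i$. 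Thus it is enough to arrange $\sum_i\epsilon_i<\infty$, after which the tail sums vanish and $\CP$ is excellent. The crux is therefore to bound each $\epsilon_i$: the extra information that $\CP_{i+1}^-$ carries over $\CP_i^-$ is exactly $\gamma_{i+1}$ read along the far-future times $\ge m_{i+1}+1$, and these $\sigma$-algebras decrease as $m_{i+1}$ grows, so one can drive $\epsilon_i$ down to the remote-future correlation $H(\CP_i\mid\CP_i^-)-H(\CP_i\mid\CP_i^-\vee\text{(tail)})$. When the system has a nontrivial Pinsker factor this limit need not vanish, so a fixed $\gamma_{i+1}$ is not enough; following \cite{RS} one resolves this by adapting the partition added at each stage so that its remote future is absorbed into $\CP_i^-$ (the tools being the Pinsker formula~\eqref{eq:pinsker} and the identity~\eqref{eq:diff-entropy}, which rewrites $\epsilon_i$ in precisely such terms), and then taking the delay large enough to make $\epsilon_i<2^{-i}$.

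Finally, part~($\imath\imath$) is immediate. Since $\CP$ is excellent, Lemma~\ref{lemma:excellent} gives $\bigcap_{k\ge 1}T^{-k}\CP^-=\Pi_\mu$. If $\CP^-$ coincided with $\CB$ up to $\mu$-null sets, then $T^{-k}\CP^-=\CB$ for every $k$, whence $\Pi_\mu=\CB$; but $\Pi_\mu=\CB$ means that every finite partition is measurable with respect to the Pinsker $\sigma$-algebra and hence has zero entropy, i.e. $h_\mu(X,T)=0$, contradicting the hypothesis $h_\mu(X,T)>0$. Therefore $\CP^-$ and $\CB$ differ on a set of positive measure.
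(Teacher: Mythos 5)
Your skeleton is the paper's own: delayed fine partitions $\CP_{i+1}=\CP_i\vee T^{-m_{i+1}}\gamma_{i+1}$, the pointwise mesh argument showing that two points in the same atom of $\CP^-$ are asymptotic, generation, and the reduction of excellence to $\sum_i\epsilon_i<\infty$ via \eqref{eq:sum-entropy} and martingale convergence are all correct and essentially identical to the paper's proof; your part ($\imath\imath$), which routes through Lemma~\ref{lemma:excellent}, is also valid (the paper instead argues directly that $\CP^-=\CB$ would force $H(\CP_n\mid\CP_n^-)\to0$ while $H(\CP_n\mid\CP_n^-)=h_\mu(\CP_n,T)\geq h_\mu(\gamma_n,T)\to h_\mu(X,T)$). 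The genuine gap is at the step you yourself call the crux: you assert that for a \emph{fixed} $\gamma_{i+1}$ the defect $\epsilon_i(m)=H(\CP_i\mid\CP_i^-)-H(\CP_i\mid\CP_i^-\vee T^{-m}\gamma_{i+1}^-)$ can only be driven down to a ``remote-future correlation'' which ``need not vanish'' when the Pinsker factor is nontrivial, and you therefore defer to an unspecified Rohlin--Sina\u{\i}-style adaptation of the partitions added at each stage. That assertion is false, and disproving it is exactly what \eqref{eq:diff-entropy} is for. Applying \eqref{eq:diff-entropy} to $\CP_i\prec\CP_i\vee T^{-m}\gamma_{i+1}$ and using $T$-invariance of $\mu$,
$$
\epsilon_i(m)=H\bigl(\gamma_{i+1}\mid T^{m+1}\CP_i^-\vee\gamma_{i+1}^-\bigr)
-H\bigl(\gamma_{i+1}\mid \CP_i^T\vee\gamma_{i+1}^-\bigr)\ ,
$$
and since $T^{m+1}\CP_i^-$ \emph{increases} to $\CP_i^T$ as $m\to\infty$, the first term decreases to the second, so $\epsilon_i(m)\to0$ --- in every system, regardless of the Pinsker factor. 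The identity converts the decreasing-limit problem that (rightly) worried you --- indeed $\bigcap_m\bigl(\CP_i^-\vee T^{-m}\gamma_{i+1}^-\bigr)$ can be strictly larger than the join of $\CP_i^-$ with the tail $\sigma$-algebra of $\gamma_{i+1}$ --- into an increasing-limit problem, where continuity of conditional entropy of a finite partition is unproblematic.

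As written, then, your proof is incomplete precisely at its central estimate. Worse, the escape route you sketch would damage the rest of the argument: if $\gamma_{i+1}$ is replaced at stage $i$ by some ``adapted'' partition, the two properties your asymptoticity and generation arguments rely on --- that the inserted partitions have mesh tending to $0$ and jointly generate $\CB$ --- are no longer automatic, and you do not explain how to preserve them. The repair is simply to delete the detour: keep your construction exactly as it stands, and at each stage choose the delay $m_{i+1}$ so large that $\epsilon_i<2^{-i}$, which the displayed limit makes possible; with that one estimate in place your proof closes and coincides with the paper's.
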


\begin{proof}

$\imath$)
Let $(\CQ_n)_{n\geq 1}$ be an increasing sequence of finite partitions
such that the maximal diameter $\delta_n$ of an element of $\CQ_n$
goes to $0$ as $n\to\infty$, and $(\epsilon_n)_{n\geq 1}$ be a
sequence of positive numbers such that
$\sum_{i=1}^{\infty} \epsilon_n < \infty$.

We construct inductively an increasing sequence $(k_n)_{n\geq 1}$ of
non-negative  integers such that, if
$$
\CP_i=T^{-k_1}\CQ_1\vee T^{-k_2}\CQ_2\dots\vee T^{-k_i}\CQ_i
$$
for $ i\geq 1$ one has
\begin{equation}
     \label{eq:rec}
H(\CP_i\mid\CP_i^-) - H(\CP_i\mid \CP_{i+1}^-) < \epsilon_i\ .
\end{equation}

Put $k_1=0$ and $\CP_1 = \CQ_1$. Take $n\geq 2$, and
suppose that the sequence is already defined up to
$k_{n-1}$ and the bound~\eqref{eq:rec} holds for $1\leq i\leq n-2$.

By Lemma~\ref{lemma:technical}~\eqref{eq:diff-entropy} one has for
$k\geq 0$

\begin{align*}
D_k
&\egaldef H(\CP_{n-1}\mid \CP_{n-1}^-) -
H(\CP_{n-1}\mid \CP_{n-1}^-\vee T^{-k}\CQ_n^-)\\
&= H(T^{-k}\CQ_n\mid \CP_{n-1}\vee \CP_{n-1}^-\vee T^{-k}\CQ_n^-)
- H(T^{-k}\CQ_n \mid \CP_{n-1}^T\vee T^{-k}\CQ _n^-)\ .
\end{align*}
By $T$-invariance of $\mu$ the second equality above becomes
$$
D_k = H(\CQ_n\mid T^{k+1}\CP_{n-1}^-\vee \CQ_n^-)
-H(\CQ_n \mid \CP_{n-1}^T\vee \CQ_n^-)\ ;
$$
when $k$
goes to infinity the conditioning $\sigma$-algebra in the first term
tends to the conditioning $\sigma$-algebra in the second term, and
the difference $D_k$ tends to $0$.

Fix $k_n$ so that
$D_{k_n} < \epsilon_n$,
  which, putting $\CP_n=\CP_{n-1} \vee T^{-k_n}\CQ_n$,
is Property~\eqref{eq:rec} at rank $i=n-1$. Setting $\CP=\bigvee_{n\in
\N}\CP_n$
completes our construction.

It remains to check that $\CP$ is excellent.

By construction, $\CP^T$ is finer than  $\bigvee_{n\geq 1}\CQ_n$,
and this partition spans $\CB$ because of our hypotheses on $(\CQ_n)$.
Thus $\CP$ is generating.

The sequence $(\CP_n)$ increases to $\CP$; moreover
$$
H(\CP_n \mid \CP_n^-) - H(\CP_n \mid \CP^-)=
\lim_{k \to \infty}\bigl (H(\CP_n \mid \CP_n^-)
- H(\CP_n \mid \CP_{n+k}^-)\bigr )\ ,
$$
and by Lemma~\ref{lemma:technical}~\eqref{eq:sum-entropy} one gets
\begin{equation}
     \label{eq:exc}
H(\CP_n \mid \CP_n^-) - H(\CP_n \mid \CP^-)\leq
\sum_{i=n}^{\infty} \big( H(\CP_i\mid\CP_i^-) -
H(\CP_i\mid \CP_{i+1}^-)\big)
<  \sum_{i=n}^{\infty} \epsilon_i\ ,
\end{equation}
a quantity which vanishes as $n\to
\infty$: the second condition for excellence of $\CP$ holds.

Let $x,y$ belong to the same atom of $\CP^-$. For each $i\geq 1$,
$T^ix$ and $T^iy$  belong to the same atom of $\CP$, thus
$T^{i+k_n}x$ and $T^{i+k_n}y$ belong to the same atom of $\CQ_n$ for all
$n\geq 1$. For all $k>k_n$ the points $T^kx$ and $T^ky$ belong to the
same atom of $\CQ_n$, and $d(T^kx,T^ky)\leq\delta_n$, thus $x,y$ are
asymptotic.

$\imath\imath$)
Assume that $\CP^- =\CB$, then by \eqref{eq:exc} one obtains
$H(\CP_n|\CP_n^{-})\to 0$. In addition
$$
H(\CP_n|\CP_n^{-}) = h_{\mu}(\CP_n,T) \ge  h_{\mu}(\CQ_n,T)\to h_{\mu}(X,T), $$
so $h_{\mu}(X,T)=0$. This completes the proof.
\end{proof}

\subsection{The invertible case}
       \label{subsec:existence}
For $x \in X$ denote by ${\mathbf A}(x)$ the set of points of $X$ that are
asymptotic to $x$.
\begin{proposition}
                    \label{prop:existence}
Let $(X,T)$ be an invertible topological dynamical system with positive
topological entropy. Then $(X,T)$ has proper asymptotic pairs.

More precisely, the set of points belonging to a proper asymptotic
pair has measure $1$ for any ergodic measure on $X$ with
positive entropy.
\end{proposition}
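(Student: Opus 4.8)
The plan is to manufacture proper asymptotic pairs out of the excellent partition $\CP$ furnished by Lemma~\ref{lemma:constr-excellent}, and then to promote a positive-measure conclusion to a full-measure one using ergodicity. By part~$\imath$ of that Lemma any two points lying in a common atom of $\CP^-$ are asymptotic, so, writing
\[
G \egaldef \{x\in X : \text{there is } y\neq x \text{ with } (x,y)\in{\mathbf A}\}
\]
for the set of points belonging to a proper asymptotic pair, it suffices to prove $\mu(G)=1$. First note that $G$ is $\mu$-measurable: the set ${\mathbf A}$ equals $\bigcap_m\bigcup_N\bigcap_{n\geq N}\{d(T^nx,T^ny)<1/m\}$ and is thus Borel in $X\times X$, so ${\mathbf A}\setminus\Delta$ is Borel and $G$, being its projection on the first coordinate, is analytic, hence universally measurable.

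The first real step is to show $\mu(G)>0$. Since $h_\mu(X,T)>0$, part~$\imath\imath$ of Lemma~\ref{lemma:constr-excellent} tells us that $\CP^-$ and $\CB$ do not coincide modulo $\mu$. I would disintegrate $\mu$ over the measurable partition $\CP^-$, obtaining conditional measures $(\mu_x)_{x\in X}$, each $\mu_x$ a probability measure carried by the atom $\CP^-(x)$. Were $\mu_x=\delta_x$ for $\mu$-almost every $x$, then for each $A\in\CB$ the conditional expectation $E[{\mathbf 1}_A\mid\CP^-](x)=\mu_x(A)={\mathbf 1}_A(x)$ would be $\CP^-$-measurable, forcing $\CB=\CP^-$ mod $\mu$, a contradiction. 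Hence the set $B\egaldef\{x:\mu_x\neq\delta_x\}=\{x:\int d(y,x)\,d\mu_x(y)>0\}$ is measurable and satisfies $\mu(B)>0$. For $x\in B$ the atom $\CP^-(x)$ cannot reduce to the singleton $\{x\}$ (otherwise $\mu_x=\delta_x$), so it contains some $y\neq x$; by part~$\imath$ the pair $(x,y)$ is proper asymptotic, whence $x\in G$. Thus $B\subseteq G$ and $\mu(G)\geq\mu(B)>0$.

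It remains to invoke ergodicity. Asymptoticity is preserved by $T$ and by $T^{-1}$, since $d(T^nx,T^ny)\to0$ if and only if $d(T^{n+1}x,T^{n+1}y)\to0$, and $T$ is a bijection; consequently $x\in G$ if and only if $Tx\in G$, i.e. $T^{-1}G=G$. As $\mu$ is ergodic and $G$ is a $T$-invariant measurable set with $\mu(G)>0$, we conclude $\mu(G)=1$. In particular $G\neq\emptyset$, so $(X,T)$ has proper asymptotic pairs, which also yields the first (qualitative) assertion of the Proposition.

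The step I expect to be the main obstacle is the second paragraph: converting the purely measure-theoretic inequality "$\CP^-\neq\CB$ mod $\mu$" into the existence of genuinely distinct \emph{points} of $X$ inside a positive-measure family of atoms. The delicate issues are the measurability of $B$ and the implication that a non-Dirac conditional measure forces its carrying atom to contain at least two actual points; both rest on Rokhlin's disintegration theory for Lebesgue spaces, which likewise underpins the equivalence between almost-everywhere Dirac conditionals and $\CP^-=\CB$ that I used above.
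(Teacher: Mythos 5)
Your proposal is correct and takes essentially the same approach as the paper: both arguments rest on Lemma~\ref{lemma:constr-excellent} (parts $\imath$ and $\imath\imath$), the $T$-invariance and universal measurability of the set of points lying in proper asymptotic pairs, and ergodicity. The only difference is logical arrangement: you argue directly, using Rokhlin disintegration over $\CP^-$ to turn ``$\CP^-\neq\CB$ mod $\mu$'' into a positive-measure set of non-singleton atoms, whereas the paper argues by contradiction (if the set had measure $0$, almost every atom of $\CP^-$ would be a singleton, forcing $\sigma(\CP^-)=\CB$ mod $0$ and contradicting part $\imath\imath$) --- these are contrapositive forms of the same Lebesgue-space fact.
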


\begin{proof} 
Let $\mu$ be an ergodic measure on $X$ with $h_\mu(X,T)>0$;
the existence of $\mu$ follows from the Variational Principle. Let
$\CP$ be the excellent partition for $(X,\CB,T,\mu)$ constructed in
Lemma~\ref{lemma:constr-excellent}.

Let $J$ be the set of points of $X$ which belong to a proper asymptotic
pair. $J$ is measurable and invariant under $T$.

By ergodicity $\mu(J)=0$ or $1$; assume that $\mu(J)=0$. Then
${\mathbf A}(x)=\{x\}$ for almost every $x$, thus $\CP^-(x)=\{x\}$ by
construction. Then $\CB = \sigma(\CP^-)$ up
to sets of measure $0$;  by
Lemma~\ref{lemma:constr-excellent} $\imath\imath$)
this contradicts $h_\mu(X,T)>0$.
\end{proof}

\begin{remark}
Although ${\mathbf A}$ is Borel, the set $J$ of points that belong to
a proper asymptotic pair may not be Borel. Nevertheless $J$ is measurable
(modulo null sets) for all Borel measures.
\end{remark}

\begin{remark} As $h(X,T)=h(X,T^{-1})$,  there are also proper asymptotic pairs
for $T^{-1}$. It will be shown later that the stable
classes of $x$ under $T$ and $T^{-1}$ do not coincide.
\end{remark}

\begin{proposition}
  Let $\pi:(X,T)\to(Y,S)$ be a topological factor map, collapsing all proper
asymptotic pairs of $X$. Then $h(Y,S)=0$.
\end{proposition}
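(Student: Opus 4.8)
The plan is to reduce this topological statement to a measure‑theoretic one by means of the Variational Principle, and then to show that the hypothesis on $\pi$ forces the factor $\sigma$‑algebra it defines to sit inside the Pinsker $\sigma$‑algebra of a suitable measure on $X$. By the Variational Principle $h(Y,S)=\sup_\nu h_\nu(Y,S)$, where $\nu$ ranges over the ergodic $S$‑invariant measures on $Y$, so it suffices to prove that $h_\nu(Y,S)=0$ for each such $\nu$. Given $\nu$, I would first lift it to an \emph{ergodic} $T$‑invariant measure $\mu$ on $X$ with $\pi_\ast\mu=\nu$: such a lift exists because $\pi_\ast$ maps the invariant measures of $X$ onto those of $Y$, and since $\nu$ is ergodic, almost every ergodic component of any invariant preimage of $\nu$ again projects to $\nu$.

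Fix such a $\mu$, write $\CC$ for the Borel $\sigma$‑algebra of $Y$, and let $\CF=\pi^{-1}(\CC)$ be the $T$‑invariant sub‑$\sigma$‑algebra of $\CB$ attached to the factor $(Y,S,\nu)$. Apply Lemma~\ref{lemma:constr-excellent}~$\imath$) to obtain an excellent partition $\CP$ for $(X,\CB,T,\mu)$ such that any two points in the same atom of $\CP^-$ are asymptotic. By hypothesis $\pi$ identifies asymptotic points, so $\pi$ is constant on each atom of $\CP^-$, and therefore, modulo $\mu$‑null sets, $\CF\subseteq\CP^-$. This inclusion by itself is too weak; the idea is to exploit the $T$‑invariance of $\CF$. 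Since $T^{-k}\CF=\CF$ for every $k$, applying $T^{-k}$ to $\CF\subseteq\CP^-$ yields $\CF\subseteq T^{-k}\CP^-$ for all $k\geq 1$, whence
\[
\CF\subseteq\bigcap_{k=1}^{\infty}T^{-k}\CP^-=\Pi_\mu
\]
by Lemma~\ref{lemma:excellent}. Consequently every finite $\CF$‑measurable partition $\CQ$ satisfies $h_\mu(\CQ,T)=0$, because (as recalled in Subsection~\ref{subsec:entropy}) a finite partition is $\Pi_\mu$‑measurable exactly when its entropy vanishes; hence $h_\nu(Y,S)=0$. As $\nu$ was arbitrary, this gives $h(Y,S)=0$.

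I expect the main obstacle to lie not in the entropy estimate but in the two softer passages surrounding it. The first is the lifting step: Lemma~\ref{lemma:constr-excellent} requires $\mu$ to be \emph{ergodic}, so one must verify carefully that $\nu$ admits an ergodic preimage under $\pi_\ast$, which rests on the surjectivity of $\pi_\ast$ on invariant measures together with the behaviour of ergodic decompositions under factor maps. The second is the justification, modulo null sets, that a map genuinely constant on every atom of the \emph{measurable} partition $\CP^-$ is measurable with respect to $\sigma(\CP^-)$; here one uses that $\CP^-=\bigvee_{n\ge 1}T^{-n}\CP$ is a measurable partition in the sense of Subsection~\ref{subsec:partitions}, so its quotient is a Lebesgue space through which $\pi$ factors, giving $\CF\subseteq\CP^-$. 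Once these points are settled, the invariance argument producing $\CF\subseteq\Pi_\mu$ is short and constitutes the heart of the proof.
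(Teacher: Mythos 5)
Your proof is correct and follows essentially the same route as the paper: lift $\nu$ to an ergodic preimage $\mu$, take the excellent partition $\CP$ of Lemma~\ref{lemma:constr-excellent}, deduce $\pi^{-1}(\CB_Y)\subseteq\CP^-$ from the collapsing hypothesis, and use $T$-invariance together with Lemma~\ref{lemma:excellent} to place this $\sigma$-algebra inside $\Pi_\mu$, whence $h_\nu(Y,S)=0$ and the Variational Principle concludes. The two points you flag as delicate (the ergodic lift, which the paper handles by citing~\cite{DGS}, and the measurability of a map constant on atoms of $\CP^-$, which follows from the paper's definition of $\sigma(\CP^-)$ as the sets of $\CA$ that are unions of atoms) are handled correctly.
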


\begin{proof}
The difficulty here comes from the fact that the system $Y$ can
have  proper asymptotic pairs \cite{Y}.

Denote the  Borel $\sigma$-algebra of $Y$ by  $\CB_Y$.
Let $\nu$ be an ergodic measure  on $Y$:
  $\nu$ has a preimage $\mu$ under $\pi$, which is $T$-ergodic~\cite{DGS}.
Let $\CP$ be the excellent partition of $(X,\CB,T,\mu)$ constructed in
Lemma~\ref{lemma:constr-excellent}.
When two points belong to the same atom of $\CP^-$
  they are asymptotic: they are collapsed by $\pi$
and belong to the same atom of $\pi^{-1}\CB_Y$.
This means that the $\sigma$-algebra $\pi^{-1}(\CB_Y)$ is contained in the
$\sigma$-algebra $\CP^-$.
As $\pi^{-1}(\CB_Y)$ is invariant by $T$,
it follows from Lemma~\ref{lemma:excellent} that it is contained
in $\Pi_\mu$.
Thus, for any  finite partition $\CQ$ of $Y$,
the partition $\pi^{-1}(\CQ)$ of $X$ is $\Pi_\mu$-measurable and
$$
h_\nu(\CQ,S)=h_\mu(\pi^{-1}(\CQ),T)=0\ .
$$
Therefore $h_\nu(Y,S)=0$; the conclusion follows from the Variational
Principle.
\end{proof}

\subsection{The non-invertible case}
                         \label{subsec:non-invertible}

Let $(X,T)$ be a non-invertible topological system:
$X$ is a compact metric space for the distance $d$,
and $T:X\to X$ is continuous and onto but not one-to-one.

$X$ evidently admits proper asymptotic pairs,
namely any pair $(x,y)$ with $x\neq y$ and $T^nx=T^ny$ for
some $n>0$; when $(X,T)$ is a subshift all asymptotic pairs are of this kind.
It is nevertheless not obvious, and interesting to know, that the
almost-everywhere result of Proposition~\ref{prop:existence} holds in the
non-invertible case too.

\begin{proposition}
                    \label{prop:non-invertible}
Let  $(X,T)$ be a non-invertible topological dynamical system.
The set of points belonging to a proper asymptotic pair
has measure $1$ for any  ergodic measure of positive entropy.
\end{proposition}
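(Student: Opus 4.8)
The plan is to deduce the statement from the invertible case (Proposition~\ref{prop:existence}) by passing to the natural extension. Fix an ergodic measure $\mu$ on $X$ with $h_\mu(X,T)>0$ and form the inverse limit $\widetilde X=\{(x_n)_{n\ge 0}\in X^{\N}:Tx_{n+1}=x_n\ \text{for all}\ n\ge 0\}$, a compact metric space for $\widetilde d((x_n),(y_n))=\sum_{n\ge 0}2^{-n}d(x_n,y_n)$, together with the shift homeomorphism $\widetilde T(x_0,x_1,\dots)=(Tx_0,x_0,x_1,\dots)$ and the factor map $\pi(x_0,x_1,\dots)=x_0$, so that $\pi\circ\widetilde T=T\circ\pi$. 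Then $(\widetilde X,\widetilde T)$ is invertible, so the background of Section~\ref{sec:back} applies to it; classically $\mu$ lifts to a unique $\widetilde T$-invariant measure $\widetilde\mu$, which is ergodic and satisfies $h_{\widetilde\mu}(\widetilde X,\widetilde T)=h_\mu(X,T)>0$~\cite{Wa}. Let $\widetilde\CB$ be the Borel $\sigma$-algebra of $\widetilde X$ and let $\widetilde\CP$ be the excellent partition of $(\widetilde X,\widetilde\CB,\widetilde T,\widetilde\mu)$ furnished by Lemma~\ref{lemma:constr-excellent}.

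First I would establish the correspondence ${\mathbf A}(\widetilde x)=\pi^{-1}({\mathbf A}(\pi\widetilde x))$ for every $\widetilde x$, where the left-hand $\mathbf A$ is taken for $\widetilde T$ and the right-hand one for $T$. Since coordinate $j$ of $\widetilde T^{\,n}\widetilde x$ equals $T^{\,n-j}x_0$ when $j\le n$ and $x_{j-n}$ when $j>n$, the distance $\widetilde d(\widetilde T^{\,n}\widetilde x,\widetilde T^{\,n}\widetilde y)$ splits into a tail over $j>n$, bounded by $2^{-n}\,\mathrm{diam}(X)$, and a finite block over $j\le n$ which is an average of the numbers $d(T^{\,m}\pi\widetilde x,T^{\,m}\pi\widetilde y)$ with weights summing to at most $1$; a short estimate shows this block tends to $0$ exactly when $d(T^{\,m}\pi\widetilde x,T^{\,m}\pi\widetilde y)\to 0$. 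Hence $\widetilde x,\widetilde y$ are $\widetilde T$-asymptotic if and only if their projections are $T$-asymptotic. In particular two points of $\widetilde X$ with the same projection are automatically $\widetilde T$-asymptotic, so the whole fibre $\pi^{-1}(\pi\widetilde x)$ is contained in ${\mathbf A}(\widetilde x)$.

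Let $J\subseteq X$ be the $T$-invariant set of points belonging to a proper asymptotic pair; by ergodicity $\mu(J)\in\{0,1\}$, and I suppose for contradiction that $\mu(J)=0$. Then ${\mathbf A}(x)=\{x\}$ for $\mu$-almost every $x$, whence, by the correspondence above, ${\mathbf A}(\widetilde x)=\pi^{-1}(\pi\widetilde x)$ for $\widetilde\mu$-almost every $\widetilde x$. By Lemma~\ref{lemma:constr-excellent}~$\imath$) every atom of $\widetilde\CP^-$ lies in a single $\widetilde T$-asymptotic class, so almost every atom of $\widetilde\CP^-$ is contained in a fibre of $\pi$; that is, $\pi^{-1}\CB\subseteq\widetilde\CP^-$ up to null sets. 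But in the natural extension the coordinate-zero algebra generates the past, $\bigvee_{n\ge 1}\widetilde T^{-n}\pi^{-1}\CB=\widetilde\CB$, because coordinate $n$ determines coordinates $0,\dots,n-1$. Combining this with $\widetilde T^{-1}\widetilde\CP^-\subseteq\widetilde\CP^-$ gives $\widetilde\CB=\bigvee_{n\ge1}\widetilde T^{-n}\pi^{-1}\CB\subseteq\widetilde\CP^-$, so $\widetilde\CP^-=\widetilde\CB$ up to null sets. This contradicts Lemma~\ref{lemma:constr-excellent}~$\imath\imath$), which gives $\widetilde\CP^-\ne\widetilde\CB$ since $h_{\widetilde\mu}(\widetilde X,\widetilde T)>0$. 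Therefore $\mu(J)=1$.

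The main obstacle is exactly the fibre phenomenon isolated in the second step, which is also the only place positive entropy is used. Because distinct preimage histories of one point are always $\widetilde T$-asymptotic, the natural extension carries plenty of proper asymptotic pairs for a trivial reason, and these project onto the diagonal of $X$; so one cannot simply invoke Proposition~\ref{prop:existence} upstairs and push down. The genuine content is the bookkeeping of the last step: were the asymptotic pairs with distinct projections $\mu$-negligible, the partition $\widetilde\CP^-$ would have to resolve every past coordinate and thus fill up $\widetilde\CB$, which positive entropy forbids. I expect the asymptotic-pair correspondence and this final $\sigma$-algebra computation to be where the real work lies, the partition $\widetilde\CP$ itself being handed to us by Lemma~\ref{lemma:constr-excellent}.
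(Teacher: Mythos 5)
Your overall plan (pass to the natural extension, prove the equivalence ``$\widetilde T$-asymptotic if and only if the projections are $T$-asymptotic'', and argue by contradiction from $\mu(J)=0$) is sound and is essentially the paper's route, and your correspondence step is correct. The proof breaks at the final $\sigma$-algebra computation. With your conventions ($\widetilde T(x_0,x_1,\dots)=(Tx_0,x_0,x_1,\dots)$, $\pi$ the coordinate-zero map), one has $\widetilde T^{-n}\pi^{-1}B=\{\widetilde x:\,T^nx_0\in B\}=\pi^{-1}(T^{-n}B)$, so $\widetilde T^{-n}\pi^{-1}\CB$ is \emph{coarser} than $\pi^{-1}\CB$ and $\bigvee_{n\ge1}\widetilde T^{-n}\pi^{-1}\CB\subseteq\pi^{-1}\CB$, which is strictly smaller than $\widetilde\CB$ whenever $T$ is not essentially invertible. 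The identity your justification (``coordinate $n$ determines coordinates $0,\dots,n-1$'') actually proves is the one with \emph{positive} powers, $\bigvee_{n\ge1}\widetilde T^{\,n}\pi^{-1}\CB=\widetilde\CB$, since $\widetilde T^{\,n}\pi^{-1}\CB$ is the $\sigma$-algebra generated by coordinate $n$. But positive powers cannot be fed into your argument: the monotonicity available is $\widetilde T^{-1}\widetilde\CP^-\subseteq\widetilde\CP^-$, so from $\pi^{-1}\CB\subseteq\widetilde\CP^-$ you may conclude $\widetilde T^{-n}\pi^{-1}\CB\subseteq\widetilde\CP^-$ but never $\widetilde T^{\,n}\pi^{-1}\CB\subseteq\widetilde\CP^-$ (indeed $\widetilde T^{\,n}\widetilde\CP^-\supseteq\widetilde\CP^-$). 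Moreover the gap cannot be patched at this level, because the inclusion you derived, $\pi^{-1}\CB\subseteq\widetilde\CP^-$ modulo null sets, is perfectly consistent with positive entropy: in the two-sided Bernoulli shift (natural extension of the one-sided one), the measurable partition $\widetilde\CP$ according to all coordinates of index $\ge-1$ is generating and excellent (the differences in Definition~\ref{def:excellent} vanish identically), the atoms of $\widetilde\CP^-$ are sets of points agreeing on all coordinates of index $\ge 0$, hence asymptotic, and $\widetilde\CP^-$ \emph{equals} $\pi^{-1}\CB$ --- yet the entropy is $\log 2$.

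The missing idea --- which is exactly the heart of the paper's proof --- is to apply the hypothesis $\mu(J)=0$ at \emph{every} coordinate, not only at coordinate $0$. Each map $\widetilde x\mapsto x_k=\pi(\widetilde T^{-k}\widetilde x)$ sends $\widetilde\mu$ to $\mu$, so $\widetilde\mu$-almost every $\widetilde x$ satisfies $x_k\notin J$ for all $k\ge0$ simultaneously. For such an $\widetilde x$, if $\widetilde y$ is $\widetilde T$-asymptotic to $\widetilde x$, then applying your correspondence to the pair $(\widetilde T^{-k}\widetilde x,\widetilde T^{-k}\widetilde y)$ shows that $(x_k,y_k)$ is $T$-asymptotic; since $x_k\notin J$ this forces $y_k=x_k$, and as this holds for every $k$ we get $\widetilde y=\widetilde x$. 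Thus ${\mathbf A}(\widetilde x)=\{\widetilde x\}$ almost surely: under the hypothesis the class is not merely the fibre $\pi^{-1}(\pi\widetilde x)$ (your statement, which is true but too weak) but a singleton. Hence $\widetilde\mu(\widetilde J)=0$, contradicting Proposition~\ref{prop:existence} applied upstairs; equivalently, the atoms of $\widetilde\CP^-$ are a.e.\ singletons, so $\widetilde\CP^-=\widetilde\CB$ modulo null sets, contradicting Lemma~\ref{lemma:constr-excellent}~$\imath\imath$). Once this ``iterate the hypothesis into the past'' step is in place, the excellent partition upstairs is not needed at all; the paper phrases the same mechanism positively, as $\pi(\widetilde J)\subseteq\bigcup_{k\ge0}T^kJ$ together with $T^{-k}(T^kJ)\subseteq J$. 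A last, minor point: in the non-invertible setting the $T$-invariance of $J$ that you assert needs an argument; what is true (and suffices for the ergodicity step) is $T^{-1}J\subseteq J$, which uses surjectivity of $T$.
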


\begin{proof}
Recall the definition of the natural
extension $(\tilde X,\tilde T)$ of $(X,T)$:
denote by $\tilde x=(x_n;n\in\Z)$ a point of $X^\Z$, and by
$\tilde X$ the closed subset of $X^{\Z}$ consisting of points
$\tilde x$ such that $x_{n+1}=Tx_n$ for all $n$.
$\tilde X$ is invariant by the shift $\tilde T$,
which is a homeomorphism of $\tilde X$.
Moreover, the map $\pi: \tilde{x}\mapsto x_0$ is onto by compactness
and satisfies $T\circ\pi=\pi\circ\tilde T$.

The  topology of $\tilde X$  is defined by the distance
$$
\tilde d(\tilde x,\tilde y)=\sum_{n\in\Z}2^{-|n|}d(x_n,y_n) \ .
$$
Thus a pair $(\tilde x,\tilde y)$ is asymptotic in $\tilde X$
if and only if the pair $(x_0,y_0)$ is asymptotic in $X$.

Let $J$ be the subset of $X$ consisting of all points belonging to a
proper asymptotic pair,
and let $\tilde J$ have the same definition in $\tilde{X}$. Since $T$ is
onto, $T^{-1}J\subset J$.
Let $z\in \pi(\tilde J)$.
Choose $\tilde x\in\tilde J$ with $x_0=z$, then there exists
$\tilde y\neq\tilde x$ such that $(\tilde x,\tilde y)$ is asymptotic.
There exists $k\geq  0$ such that $y_{-k}\neq x_{-k}$,
thus $(x_{-k},y_{-k})$ is a proper asymptotic pair in $X$,
and $x_{-k}\in J$.
It follows from $z=x_0=T^kx_{-k}$ that $z\in T^kJ$.
Finally $\pi(\tilde J)\subset\bigcup_{k\geq 0}T^kJ$.

Let $\mu$ be an ergodic measure on $X$ with $h_\mu(X,T)>0$.
It lifts to an ergodic measure $\tilde\mu$
on $\tilde X$, with $h_{\tilde\mu}(\tilde{X},\tilde T)>0$.
By Proposition~\ref{prop:existence}, $\tilde\mu(\tilde J)=1$,
thus $\mu(\pi(\tilde J))=1$, which by the inclusion above implies that
$\mu(T^k J)>0$ for some $k$.

For every $k$, $T^{-k}(T^k J)\subset J$: if $T^kx\in T^k J$ there exist
$y\in J$ and $z$ such that $T^k x=T^k y$ and $(y,z)$ is a proper
asymptotic pair. Then either $(x,z)$ or $(x,y)$ is a proper asymptotic
pair depending on whether $x=y$ or not, and $x\in J$.
By the inclusion above it  follows that $\mu(J)>0$, and
since $\mu$ is ergodic $\mu(J)=1$.
\end{proof}

\section{Relatively independent squares}
                               \label{sec:squares}
\subsection{Background}
                          \label{subsec:squares-background}

Let $(X,T)$ be a topological dynamical system, $\CB$ be its Borel
$\sigma$-algebra, and $\mu$ be an ergodic measure.

For the definition and classical properties of conditional
expectations used in this section see~\cite{D1}, \cite{D2},
\cite{B}.
We shall use the

\begin{MT}
Let $(\CG_n)_{n\geq 1}$ be a decreasing sequence of sub-$\sigma$-algebras of
$\CB$ and let $\CG=\bigcap_{n\geq 1}\CG_n$.
  For every $f\in L^2(\mu)$,
$\E(f\mid\CG_n)\to \E(f\mid\CG)$ in $L^2(\mu)$ and almost everywhere.
\end{MT}

The definition of the relatively independent (or conditional)
product of two systems can be found in \cite{R}.

\begin{definition}
                      \label{def:squares}
  Let $\CG$ be a sub-$\sigma$-algebra of $\CB$. The
\emph{conditional square} $\mu\ttimes_\CG\mu$ of $\mu$ relatively to
$\CG$ is the measure on $(X\times X,\CB\otimes\CB)$ determined by
$$
\forall A,B\in\CB,\ \mu\ttimes_\CG\mu(A\times B)=
\int\E(\one_A\mid\CG)(x)\,\E(\one_B\mid\CG)(x)\,d\mu(x)\ .
$$
\end{definition}

$\mu\ttimes_\CG\mu$ is a probability measure, and its two projections
on $X$ are equal to $\mu$.

By standard arguments for every pair of bounded Borel functions $f,g$ on $X$
one has
$$
\int f(x)g(y)\,d(\mu\ttimes_\CG\mu)(x,y)=
\int \E(f\mid\CG)(x)\,\E(g\mid\CG)(x)\,d\mu(x)\ .
$$

The following lemma states the properties  of conditional squares that will be
used.

\begin{lemma}
                   \label{lemma:squares}
Let $\CG$ be a sub-$\sigma$-algebra of $\CB$.
\begin{itemize}

\item[$\imath$)] $\mu\ttimes_\CG\mu$ is concentrated on the diagonal
$\Delta$ of $X\times X$ if and only if the $\sigma$-algebras $\CG$
and $\CB$ are equal up to null sets.

\item[$\imath\imath$)]
If the $\sigma$-algebra $\CG$ is invariant by $T$, then the measure
$\mu\ttimes_\CG\mu$ is invariant by $T\times T$.

\item[$\imath\imath\imath$)] Let $f$ be a bounded $\CG$-measurable function on
$X$. Then $f(x)=f(y)$ for $\mu\ttimes_\CG\mu$-almost
all $(x,y)\in X\times X$.

\item[$\imath\nu$)]
  Let $(\CG_n)_{n\geq 1}$ be a decreasing sequence of
$\sigma$-algebras with $\bigcap_{n\geq 1}\CG_n=\CG$. Then for all
$A,B\in\CB$ one has
$$
\mu\ttimes_\CG\mu(A\times B)=
\lim_{n\to\infty}\mu\ttimes_{\CG_n}\mu(A\times B)\ ,
$$
and the sequence $(\mu\ttimes_{\CG_n}\mu; n\geq 1)$ converges
weakly to $\mu\ttimes_\CG\mu$.
\end{itemize}
\end{lemma}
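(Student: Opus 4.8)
The plan is to run everything off the bilinear formula stated just before the lemma,
$$
\int f(x)\,g(y)\,d(\mu\ttimes_\CG\mu)(x,y)=\int \E(f\mid\CG)\,\E(g\mid\CG)\,d\mu,
$$
together with the fact that both marginals of $\mu\ttimes_\CG\mu$ are $\mu$. I would prove $\imath\imath\imath$) first, as it is cleanest: for bounded $\CG$-measurable $f$ one has $\E(f\mid\CG)=f$, so expanding $\int(f(x)-f(y))^2\,d(\mu\ttimes_\CG\mu)$, applying the formula to the cross term and the marginal property to the two square terms, collapses it to $\int f^2\,d\mu-2\int f^2\,d\mu+\int f^2\,d\mu=0$; hence $f(x)=f(y)$ almost everywhere. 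For $\imath$) the forward implication is immediate: if $\CG=\CB$ then $\E(\one_A\mid\CG)=\one_A$, so $\mu\ttimes_\CG\mu(A\times B)=\mu(A\cap B)$, the image of $\mu$ under $x\mapsto(x,x)$, which is carried by $\Delta$. For the converse I would use that concentration on $\Delta$ forces $\int(\one_A(x)-\one_A(y))^2\,d(\mu\ttimes_\CG\mu)=0$ for every $A\in\CB$; the same expansion turns this into $\int\E(\one_A\mid\CG)^2\,d\mu=\int\E(\one_A\mid\CG)\,d\mu$. Writing $\phi=\E(\one_A\mid\CG)\in[0,1]$, this says $\int\phi(1-\phi)\,d\mu=0$ with a nonnegative integrand, so $\phi\in\{0,1\}$ a.e.; thus $A$ is $\CG$-measurable modulo null sets, and as $A$ is arbitrary, $\CB\subset\CG$.

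For $\imath\imath$) the key auxiliary fact is the commutation identity $\E(f\circ T\mid\CG)=\E(f\mid\CG)\circ T$. Since $T$ is invertible and $\mu$-preserving and $T^{-1}\CG=\CG$ (hence also $T\CG=\CG$), the function $\E(f\mid\CG)\circ T$ is $\CG$-measurable, and a change of variables gives $\int_C\E(f\mid\CG)\circ T\,d\mu=\int_C f\circ T\,d\mu$ for every $C\in\CG$, which identifies it as $\E(f\circ T\mid\CG)$. Plugging $\one_A\circ T=\one_{T^{-1}A}$ and $\one_B\circ T$ into the bilinear formula and using invariance of $\mu$ then yields $\mu\ttimes_\CG\mu(T^{-1}A\times T^{-1}B)=\mu\ttimes_\CG\mu(A\times B)$; since measurable rectangles generate $\CB\otimes\CB$, the measure is $T\times T$-invariant.

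For $\imath\nu$) I would invoke the Martingale Theorem on the decreasing sequence $(\CG_n)$: $\E(\one_A\mid\CG_n)\to\E(\one_A\mid\CG)$ almost everywhere (and in $L^2$), and likewise for $B$. The integrands $\E(\one_A\mid\CG_n)\,\E(\one_B\mid\CG_n)$ are bounded by $1$ and converge a.e., so dominated convergence gives $\mu\ttimes_{\CG_n}\mu(A\times B)\to\mu\ttimes_\CG\mu(A\times B)$. Repeating the argument with continuous $f,g$ in place of $\one_A,\one_B$ gives convergence of $\int f(x)g(y)\,d(\mu\ttimes_{\CG_n}\mu)$; since finite linear combinations of such products are dense in $C(X\times X)$ and all the measures are probabilities, weak convergence follows.

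The hard part is the converse half of $\imath$): upgrading a measure-theoretic statement about the diagonal to an equality of $\sigma$-algebras. The step $\phi(1-\phi)=0\Rightarrow\phi\in\{0,1\}$ is the pivot and deserves to be written out. The only other place needing attention is the commutation identity in $\imath\imath$), where invertibility of $T$ and the two-sided invariance $T\CG=\CG$ are genuinely used.
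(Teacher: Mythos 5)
Your proposal is correct and follows essentially the same route as the paper: part $\imath\imath\imath$) via the vanishing of $\int|f(x)-f(y)|^2\,d(\mu\ttimes_\CG\mu)$, part $\imath$) by forcing $\E(\one_A\mid\CG)$ to take only the values $0$ and $1$ a.e.\ (the paper integrates the product $\E(\one_A\mid\CG)\,\E(\one_{X\setminus A}\mid\CG)$ instead of expanding a square, which is an equivalent computation), and part $\imath\nu$) by the reverse Martingale Theorem followed by density of linear combinations of product functions in $\CC(X\times X)$. Your only genuine addition is writing out the commutation identity $\E(f\circ T\mid\CG)=\E(f\mid\CG)\circ T$ for part $\imath\imath$), a step the paper dismisses as obvious.
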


\begin{proof}
$\imath$) If $\CG=\CB$, then for all $A,B\in\CB$ we have
$\E(\one_A\mid\CG)=\one_A$ and  $\E(\one_B\mid\CG)=\one_B$ $\mu$-a.e.;
then by definition $\mu\ttimes_\CG\mu(A\times B)=\mu(A\cap B)$; the
measure $\mu\ttimes_\CG\mu$ is the image of $\mu$ under the map
$x\mapsto(x,x)$, thus it is concentrated on  $\Delta$.

If $\mu\ttimes_\CG\mu$ is concentrated on $\Delta$, for all
$A\in\CB$ one has $\mu\ttimes_\CG\mu(A\times(X\setminus A))=0$,
that is,
$$
  \int\E(\one_A\mid\CG)(x)\,\E(\one_{X\setminus A}\mid\CG)(x)\,d\mu(x)=0\ ,
$$
thus the product of the two conditional expectations is equal to $0$
a.e.. As the sum of these two functions is equal to $1$,
each of them is equal to $0$ or $1$ a.e.. It follows that
$\E(\one_A\mid\CG)=\one_A$ a.e., and $A$ is measurable with respect
to $\CG$. The $\sigma$-algebras $\CG$
and $\CB$ are equal up to null sets.

$\imath\imath$) Obvious.

$\imath\imath\imath$) By definition
$$
\int f(x)\overline{f(y)}\,d(\mu\ttimes_\CG\mu)(x,y)=
\int |f(x)|^2\,d\mu(x)\text{ because $f$ is $\CG$-measurable
thus }$$
$$\int|f(x)-f(y)|^2\,d(\mu\ttimes_\CG\mu)(x,y)= 0\ .
$$

$\imath\nu$)
When $f$ and $g$ are bounded measurable functions
on $X$, by the Martingale Theorem
\begin{align}
\label{eq:conv}
\int f(x)g(y)\,d(\mu\ttimes_{\CG_n}\mu)(x,y)
   &=\int \E(f\mid\CG_n)(x)\,\E(g\mid\CG_n)(x)\,d\mu(x)\\
    &  \to \int \E(f\mid\CG)(x)\,\E(g\mid\CG)(x)\,d\mu(x)\notag\\
   & =\int f(x)g(y)\,d(\mu\ttimes_\CG\mu)(x,y)\ .\notag
\end{align}
For $f=\one_A$ and $g=\one_B$ this is the first part of $\imath\nu$). The
family
of  continuous functions $F$ on $X\times X$ such that
$$
\int F(x,y)\,d(\mu\ttimes_{\CG_n}\mu)(x,y)\to
  \int F(x,y)\,d(\mu\ttimes_\CG\mu)(x,y)
$$
is a closed subspace of $\CC(X\times X)$. By equation~\eqref{eq:conv}
it contains all functions $f(x)g(y)$ where $f$ and $g$ belong to
$\CC(X)$ and their linear combinations. By density it is equal to $\CC(X\times
X)$, which completes the proof.
\end{proof}

We consider now the case where $\CG$ is associated to a measurable
partition, also denoted by $\CG$.

\begin{lemma}
               \label{lemma:part-squares}
Let $\CG$ be a measurable partition. Then the set
$$
\Delta_\CG=\bigl\{(x,y)\in X\times X; y\in\CG(x)\bigr\}
$$
belongs to $\CB\otimes\CB$, and $\mu\ttimes_\CG\mu$ is concentrated on
this set.
\end{lemma}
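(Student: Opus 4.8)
The plan is to reduce everything to the finite partitions whose join defines $\CG$. By definition of a measurable partition, write $\CG=\bigvee_{n\geq 1}\CG_n$ where each $\CG_n$ is a finite partition, say with atoms $A^{(n)}_1,\dots,A^{(n)}_{k_n}\in\CB$. Two points $x,y$ lie in the same atom of $\CG$ precisely when they lie in the same atom of every $\CG_n$, so that
$$
\Delta_\CG=\bigcap_{n\geq 1}\Delta_{\CG_n},\qquad
\Delta_{\CG_n}=\bigcup_{j=1}^{k_n}A^{(n)}_j\times A^{(n)}_j\ .
$$

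First I would settle measurability. Each $\Delta_{\CG_n}$ is a finite union of rectangles $A^{(n)}_j\times A^{(n)}_j$ with $A^{(n)}_j\in\CB$, hence lies in $\CB\otimes\CB$; being a countable intersection of such sets, $\Delta_\CG$ belongs to $\CB\otimes\CB$ as well.

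Then I would prove concentration using part $\imath\imath\imath$) of Lemma~\ref{lemma:squares}. Each atom $A^{(n)}_j$ is $\CG_n$-measurable, hence $\CG$-measurable, so its indicator $\one_{A^{(n)}_j}$ is a bounded $\CG$-measurable function. By Lemma~\ref{lemma:squares}~$\imath\imath\imath$), for every $n$ and $j$ one has $\one_{A^{(n)}_j}(x)=\one_{A^{(n)}_j}(y)$ for $\mu\ttimes_\CG\mu$-almost every $(x,y)$. Since there are only countably many pairs $(n,j)$, the union of the corresponding null sets is null, so for $\mu\ttimes_\CG\mu$-almost every $(x,y)$ all these equalities hold simultaneously. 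This means that $x$ and $y$ belong to the same atom of each $\CG_n$, hence to the same atom of $\CG$; that is, $(x,y)\in\Delta_\CG$. Therefore $\mu\ttimes_\CG\mu$ is concentrated on $\Delta_\CG$.

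The argument involves no real obstacle; the only point requiring care is the passage from the countably many almost-sure equalities to a single full-measure statement, which is legitimate precisely because $\CG$ is countably generated. Alternatively, one may avoid Lemma~\ref{lemma:squares}~$\imath\imath\imath$) and argue directly: for $j\neq j'$ the atoms $A^{(n)}_j$ are $\CG$-measurable, so $\E(\one_{A^{(n)}_j}\mid\CG)=\one_{A^{(n)}_j}$ a.e., whence $\mu\ttimes_\CG\mu(A^{(n)}_j\times A^{(n)}_{j'})=\mu(A^{(n)}_j\cap A^{(n)}_{j'})=0$; summing over the finitely many off-diagonal pairs gives $\mu\ttimes_\CG\mu(\Delta_{\CG_n}^c)=0$ for each $n$, and hence $\mu\ttimes_\CG\mu(\Delta_\CG^c)=0$.
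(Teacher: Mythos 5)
Your proof is correct, and it shares the paper's skeleton---write $\CG=\bigvee_{n\geq 1}\CG_n$ with $\CG_n$ finite, observe that $\Delta_\CG=\bigcap_n\Delta_{\CG_n}$ where each $\Delta_{\CG_n}$ is a finite union of measurable rectangles, and prove concentration on each $\Delta_{\CG_n}$---but the key step is handled differently. The paper first notes that $\mu\ttimes_{\CG_n}\mu(A\times B)=0$ for distinct atoms $A,B$ of $\CG_n$ and then transfers this to $\mu\ttimes_\CG\mu$ via the convergence statement, Lemma~\ref{lemma:squares}~$\imath\nu$). You instead work directly with the limit measure $\mu\ttimes_\CG\mu$, exploiting the fact that the atoms $A^{(n)}_j$ are themselves $\sigma(\CG)$-measurable: either through Lemma~\ref{lemma:squares}~$\imath\imath\imath$) applied to the countably many indicators $\one_{A^{(n)}_j}$, or through the direct computation $\mu\ttimes_\CG\mu\bigl(A^{(n)}_j\times A^{(n)}_{j'}\bigr)=\mu\bigl(A^{(n)}_j\cap A^{(n)}_{j'}\bigr)=0$ for $j\neq j'$. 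Your route buys two things: it needs no limiting (martingale) argument at all, and it sidesteps a small mismatch in the paper's citation, namely that Lemma~\ref{lemma:squares}~$\imath\nu$) is stated for a \emph{decreasing} sequence of $\sigma$-algebras, whereas the $\sigma$-algebras $\sigma(\CG_n)$ here \emph{increase} to $\sigma(\CG)$, so the paper is implicitly invoking the increasing-martingale analogue of that lemma. A further small merit of your writeup is that you make the measurability of $\Delta_\CG$ explicit, which the paper leaves implicit.
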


\begin{proof}
Let $(\CG_n)_{n\geq 1}$ be an increasing sequence of finite
partitions with $\bigvee_{n\geq 1}\CG_n=\CG$.
Whenever $A,B$ are two distinct atoms of $\CG_n$ it follows
immediately from the definition that $\mu\ttimes_{\CG_n}\mu(A\times B)=0$.
By Lemma~\ref{lemma:squares}~$\imath\nu$),
$\mu\ttimes_\CG\mu(A\times B)=0$.
Thus, for all $n$ the measure
$\mu\ttimes_\CG\mu$ is concentrated on $\Delta_{\CG_n}$.
But the intersection of these sets is $\Delta_\CG$,
and the result follows.
\end{proof}

\subsection{The `construction $\CC$'}
                                   \label{subsec:C}

In the sequel we use several times the following construction,
referred to as \emph{the construction $\CC$}, with the same
notation.

Let $(X,T)$ be a topological dynamical system, $\CB$ be its Borel
$\sigma$-algebra and ${\mathbf A}$ be the set of asymptotic pairs; it is
a Borel subset of $X\times X$, invariant under $T\times T$.

Let $\mu$ be an invariant ergodic measure. Using
Lemma~\ref{lemma:constr-excellent}, choose an excellent partition $\CP$,
such that any pair
of points belonging to the same atom of $\CP^-$ is asymptotic, and put
$\CF=\sigma(\CP^-)$. By Lemma~\ref{lemma:constr-excellent} again
if $h_\mu(X,T)>0$, $\CF$ is not equal to $\CB$ up to $\mu$-null sets.
In the notation of Lemma~\ref{lemma:part-squares}
$$
  \Delta_\CF\subset {\mathbf A}\ .
$$
For every $n\geq 0$ put
$$
  \CF_n=T^{-n}\CF\ \text{ and } \ \nu_n=\mu\ttimes_{\CF_n}\mu \ ;
$$
one has
$$
   \Delta_{\CF_n}=(T\times T)^{-n}\Delta_\CF\subset {\mathbf A}\text{ and }
   \nu_n=(T\times T)^{-n}\nu_0\ ;
$$
  thus $\nu_n$ is concentrated on ${\mathbf A}$.
Moreover, the sequence of sets $(\Delta_{\CF_n})_{n\geq 0}$ is
increasing;
the sequence
$(\CF_n)_{n\geq 0}$ of $\sigma$-algebras is decreasing and its
intersection is equal to $\Pi_\mu$ up to sets of $\mu$-measure $0$ by
Lemma~\ref{lemma:excellent}.

Define
$$
\lambda=\mu\ttimes_{\Pi_\mu}\mu\ .
$$
 From Lemma~\ref{lemma:squares}~$\imath\nu$)
one gets

\begin{corollary}
                       \label{cor:conv}

\begin{itemize}
\item[$\imath$)]
For every $A,B\in\CB$, $\nu_n(A\times B)\to\lambda(A\times B)$
as $n\to\infty$, and the sequence $(\nu_n)_{n\geq 0}$ of measures on
$X\times X$ converges weakly to $\lambda$.

\item[$\imath\imath$)]
For every  closed subset  $F$  of $X\times X$ with
$(T\times T)F\supset F$ one has $\lambda(F)\geq \nu_0(F)$.

\item[$\imath\imath\imath$)]
For every open subset $U$ of $X\times X$ with $(T\times T)U\subset U$
one has $\lambda(U)\leq\nu_0(U)$.
\end{itemize}
\end{corollary}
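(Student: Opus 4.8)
The plan is to read part $\imath$) off Lemma~\ref{lemma:squares}~$\imath\nu$) directly, and to deduce parts $\imath\imath$) and $\imath\imath\imath$) from the weak convergence in $\imath$) combined with the semi-invariance of $F$ and $U$ and the dynamical relation $\nu_n=(T\times T)^{-n}\nu_0$ recorded in the construction $\CC$.

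For $\imath$): the construction furnishes a decreasing sequence of $\sigma$-algebras $(\CF_n)_{n\geq 0}$ whose intersection equals $\Pi_\mu$ up to $\mu$-null sets, and by definition $\nu_n=\mu\ttimes_{\CF_n}\mu$ and $\lambda=\mu\ttimes_{\Pi_\mu}\mu$. Since conditional expectations, and hence conditional squares, depend on a $\sigma$-algebra only modulo $\mu$-null sets, I may replace $\bigcap_n\CF_n$ by $\Pi_\mu$ and apply Lemma~\ref{lemma:squares}~$\imath\nu$) verbatim. This yields both $\nu_n(A\times B)\to\lambda(A\times B)$ for all $A,B\in\CB$ and the weak convergence $\nu_n\to\lambda$ on the compact space $X\times X$.

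For $\imath\imath$) and $\imath\imath\imath$) I would first unwind the relation $\nu_n=(T\times T)^{-n}\nu_0$ into the pointwise statement that, for every Borel set $E\subset X\times X$, $\nu_n(E)=\nu_0\bigl((T\times T)^nE\bigr)$, where $(T\times T)^nE$ denotes the forward image under the homeomorphism $T\times T$. On rectangles this is immediate from the $T$-invariance of $\mu$ together with the identity $\E(\,\cdot\mid T^{-1}\CF)=\E(\,\cdot\circ T^{-1}\mid\CF)\circ T$, and it extends to all Borel $E$ by a standard generating-class argument. Now if $F$ is closed with $(T\times T)F\supset F$, iterating gives the increasing chain $F\subset(T\times T)F\subset(T\times T)^2F\subset\cdots$, so that $\nu_n(F)=\nu_0\bigl((T\times T)^nF\bigr)\geq\nu_0(F)$ for all $n$; dually, if $U$ is open with $(T\times T)U\subset U$, then $(T\times T)^nU$ decreases, giving $\nu_n(U)\leq\nu_0(U)$ for all $n$.

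It then remains to pass to the limit using $\imath$). By the portmanteau theorem, $\limsup_n\nu_n(F)\leq\lambda(F)$ for the closed set $F$ and $\liminf_n\nu_n(U)\geq\lambda(U)$ for the open set $U$. Combining with the monotonicity above, $\nu_0(F)\leq\limsup_n\nu_n(F)\leq\lambda(F)$ proves $\imath\imath$), while $\lambda(U)\leq\liminf_n\nu_n(U)\leq\nu_0(U)$ proves $\imath\imath\imath$). The only point genuinely requiring care is the bookkeeping of the two directions: matching the hypothesis $(T\times T)F\supset F$ with an increasing chain and the closed-set half of portmanteau, and the open case dually, so that both inequalities point the right way. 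Everything else is formal.
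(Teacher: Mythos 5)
Your proposal is correct and follows essentially the same route as the paper: part $\imath$) is read off Lemma~\ref{lemma:squares}~$\imath\nu$), and parts $\imath\imath$), $\imath\imath\imath$) combine the portmanteau inequality for the weak limit with the monotonicity of $\nu_n(F)=\nu_0\bigl((T\times T)^nF\bigr)$ coming from $\nu_n=(T\times T)^{-n}\nu_0$ and the semi-invariance of $F$ (resp. $U$). The only cosmetic difference is that you prove $\imath\imath\imath$) directly by the dual monotonicity argument, whereas the paper deduces it from $\imath\imath$) by passing to complements.
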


\begin{proof} $\imath$) Immediate from
Lemma~\ref{lemma:squares}~$\imath\nu$).

\noindent
$\imath\imath$) Since $F$ is closed and $\nu_n\to\lambda$ weakly one has
$$
\lambda(F)\geq \limsup_{n\to\infty}\nu_n(F)\ .
$$
But the sequence $\nu_n(F)=\nu_0\bigl((T\times T)^nF\bigr)$
is increasing and the result follows.

\noindent
$\imath\imath\imath$) Immediate from $\imath\imath$).
\end{proof}

The next result shows that a
2-set partition of positive entropy separates some asymptotic pair.
Significantly, it does the same for some entropy pair \cite{BHM}.

\begin{corollary}
                 \label{cor:part-asympt}
Let $\CQ=(A_1,A_2)$ be a Borel partition with $h_\mu(\CQ,T)>0$. Then there
exists an asymptotic pair $(x_1,x_2)$ with $x_1\in A_1$ and $x_2\in
A_2$.
\end{corollary}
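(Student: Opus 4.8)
The plan is to exploit the measure $\lambda=\mu\ttimes_{\Pi_\mu}\mu$ produced by the construction $\CC$, which records the limiting behaviour of the measures $\nu_n$ that live on the asymptotic set $\mathbf A$. The target set $A_1\times A_2$ consists exactly of the pairs separated by $\CQ$, so it suffices to exhibit one point of $\mathbf A$ inside $A_1\times A_2$. Since each $\nu_n$ is concentrated on $\mathbf A$, it is enough to find an index $n$ with $\nu_n(A_1\times A_2)>0$.

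First I would compute the limiting mass $\lambda(A_1\times A_2)$. Writing $g=\E(\one_{A_1}\mid\Pi_\mu)$ and using $\one_{A_2}=1-\one_{A_1}$ (as $\CQ$ is a two-set partition), the definition of the conditional square gives
$$
\lambda(A_1\times A_2)=\int\E(\one_{A_1}\mid\Pi_\mu)(x)\,\E(\one_{A_2}\mid\Pi_\mu)(x)\,d\mu(x)=\int g(1-g)\,d\mu\ .
$$
The key point is that this quantity is strictly positive. Indeed, if the integral vanished then $g(1-g)=0$ almost everywhere, so $g$ takes only the values $0$ and $1$ and hence $g=\one_E$ for some $\Pi_\mu$-measurable set $E$. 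Testing the identity $\E(\one_{A_1}\mid\Pi_\mu)=\one_E$ against $E$ yields $\mu(A_1\cap E)=\mu(E)$, and testing it against $X\setminus E$ yields $\mu(A_1\setminus E)=0$; together these give $A_1=E$ up to null sets, so that $A_1$, and therefore $\CQ$, is measurable with respect to $\Pi_\mu$. By the characterisation of the Pinsker factor this forces $h_\mu(\CQ,T)=0$, contradicting the hypothesis. Hence $\lambda(A_1\times A_2)>0$.

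Finally I would invoke Corollary~\ref{cor:conv}~$\imath$), by which $\nu_n(A_1\times A_2)\to\lambda(A_1\times A_2)>0$; thus $\nu_n(A_1\times A_2)>0$ for all large $n$. Because $\nu_n$ is concentrated on $\mathbf A$, the intersection $\mathbf A\cap(A_1\times A_2)$ has positive $\nu_n$-measure and is therefore nonempty. Any pair $(x_1,x_2)$ lying in it is asymptotic and satisfies $x_1\in A_1$, $x_2\in A_2$, as required; note that the disjointness of $A_1$ and $A_2$ makes such a pair automatically proper.

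The only delicate step is the strict positivity of $\lambda(A_1\times A_2)$; everything else is bookkeeping with the construction $\CC$ and the weak convergence of Corollary~\ref{cor:conv}. The heart of the matter is the chain of equivalences between $\CQ$ having positive entropy, $\CQ$ failing to be $\Pi_\mu$-measurable, and $\E(\one_{A_1}\mid\Pi_\mu)$ failing to be an indicator function; this last condition is precisely what makes the off-diagonal mass of the relatively independent square over the Pinsker factor positive.
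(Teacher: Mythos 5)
Your proof is correct and follows essentially the same route as the paper's: both rest on the identity $\lambda(A_1\times A_2)=\int\E(\one_{A_1}\mid\Pi_\mu)\,\E(\one_{A_2}\mid\Pi_\mu)\,d\mu$, the convergence $\nu_n(A_1\times A_2)\to\lambda(A_1\times A_2)$ from Corollary~\ref{cor:conv}~$\imath$), the fact that each $\nu_n$ is concentrated on $\mathbf A$, and the characterization of the Pinsker factor to rule out $\CQ$ being $\Pi_\mu$-measurable. The paper merely phrases the argument as a contradiction (no separated asymptotic pair forces $\lambda(A_1\times A_2)=0$ and hence $h_\mu(\CQ,T)=0$), whereas you run the same implications in the contrapositive, direct order.
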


\begin{proof}
If the result is false, then $(A_1\times A_2)\cap {\mathbf A}=\emptyset$, and
$\nu_n(A_1\times A_2)=0$ for all $n$, thus by Corollary 1 $\imath$)
$$
0=\lambda(A_1\times A_2)=
\int\E(\one_{A_1}\mid\Pi_\mu)(x)\,\E(\one_{A_2}\mid\Pi_\mu)(x)\,d\mu(x)\ .
$$
As the two conditional expectations in the integral are non-negative
and have sum equal to $1$, each of them is equal to $0$ or $1$
a.e., which means that the sets $A_1$ and $A_2$ belong to the
$\sigma$-algebra $\Pi_\mu$; thus $h_\mu(Q,T)=0$, which contradicts the
assumption. \end{proof}

\subsection{Application to entropy pairs.}
                      \label{subsec:appl-entropy-pairs}

The definition of entropy pairs of a topological system $(X,T)$ is
given in~\cite{Bl}.
  The set
$E(X,T)$ of entropy pairs is a $T\times T$ invariant subset of
$X\times X$, and $E(X,T)\cup\Delta$ is closed. The system $(X,T)$ has entropy
pairs if and only if its entropy is positive.

The reader should be reminded of the definition of entropy
pairs for an invariant measure $\mu$~\cite{BHM}.
Let $x,y\in X$ with $x\neq y$.
A partition $\CQ=(A,B)$ is said to \emph{separate} $x$ and $y$ if
$x$ belongs to the interior of $A$ and $y$ to the interior of $B$.
$(x,y)$ is said to be an \emph{entropy pair for $\mu$} if for any
partition $\CQ$ separating $x$ and $y$ one has $h_\mu(\CQ,T)>0$.
Call $E_\mu(X,T)$ the set of entropy pairs for $\mu$.
This set is non-empty if and only if $h_\mu(X,T)>0$.

It is shown in~\cite{BGH} that $E(X,T)=\overline{\bigcup_\mu E_\mu(X,T)}$,
where the union is taken over the family of ergodic measures.

Moreover, Glasner shows in~\cite{G} that for any ergodic
measure $\mu$,  $E_\mu(X,T)$ is the set of non-diagonal points in
  the topological support of $\mu\ttimes_{\Pi_\mu}\mu$ (this result
also follows easily from the definition of $\mu\ttimes_{\CF}\mu$ and
Lemma~\ref{lemma:squares}).

\begin{proposition}
               \label{prop:entropy-pairs}
The closure $\overline{\mathbf A}$ of ${\mathbf A}$ in $X\times X$ contains
the set
$E(X,T)$ of entropy pairs.
\end{proposition}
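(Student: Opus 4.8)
The plan is to reduce the statement to a fact about a single ergodic measure and then exploit the weak convergence established in Corollary~\ref{cor:conv}. Since $\overline{\mathbf A}$ is closed and, by~\cite{BGH}, one has $E(X,T)=\overline{\bigcup_\mu E_\mu(X,T)}$ with the union taken over ergodic measures, it suffices to prove that $E_\mu(X,T)\subset\overline{\mathbf A}$ for every ergodic $\mu$; the outer closure is then absorbed for free, since $\overline{\overline{\mathbf A}}=\overline{\mathbf A}$. So I fix an ergodic $\mu$ and run the construction $\CC$ attached to it, obtaining the excellent partition $\CP$, the decreasing $\sigma$-algebras $\CF_n=T^{-n}\CF$, the measures $\nu_n=\mu\ttimes_{\CF_n}\mu$ concentrated on $\mathbf A$, and the limit $\lambda=\mu\ttimes_{\Pi_\mu}\mu$.

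The key observation is that each $\nu_n$ is concentrated on $\mathbf A$, hence on the \emph{closed} set $\overline{\mathbf A}$, so $\nu_n(\overline{\mathbf A})=1$ for every $n$. By Corollary~\ref{cor:conv}~$\imath$) the sequence $(\nu_n)$ converges weakly to $\lambda$; applying the portmanteau inequality to the closed set $\overline{\mathbf A}$ gives $\lambda(\overline{\mathbf A})\geq\limsup_n\nu_n(\overline{\mathbf A})=1$. Thus $\lambda$ is concentrated on $\overline{\mathbf A}$, and since $\overline{\mathbf A}$ is a closed set of full $\lambda$-measure, its topological support satisfies $\supp(\lambda)\subset\overline{\mathbf A}$.

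To conclude I invoke Glasner's characterization~\cite{G} recalled above: for the ergodic measure $\mu$, the set $E_\mu(X,T)$ is exactly the set of non-diagonal points of $\supp(\mu\ttimes_{\Pi_\mu}\mu)=\supp(\lambda)$. In particular $E_\mu(X,T)\subset\supp(\lambda)\subset\overline{\mathbf A}$. Taking the union over all ergodic $\mu$ and then the closure yields $E(X,T)=\overline{\bigcup_\mu E_\mu(X,T)}\subset\overline{\mathbf A}$, which is the desired inclusion.

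I do not expect a genuine obstacle, because the hard analytic work has already been carried out in setting up construction $\CC$ and in proving the weak convergence $\nu_n\to\lambda$. The one point requiring care is that the passage from ``each $\nu_n$ lives on $\overline{\mathbf A}$'' to ``$\lambda$ lives on $\overline{\mathbf A}$'' must be run against the \emph{closed} set $\overline{\mathbf A}$, using the inequality $\lambda(F)\geq\limsup_n\nu_n(F)$ valid for closed $F$; one cannot argue directly with the Borel set $\mathbf A$, whose measure need not pass to the weak limit. The conceptual message worth stressing is that we never produce an asymptotic pair near a given entropy pair by an explicit construction: instead we transport the information that the approximating measures $\nu_n$ are supported on asymptotic pairs through the weak limit $\lambda$, and then read off the entropy pairs as support points of $\lambda$ via Glasner's theorem.
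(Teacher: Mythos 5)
Your proof is correct and follows essentially the same route as the paper: run construction $\CC$ for each ergodic $\mu$, note that the measures $\nu_n$ live on the closed set $\overline{\mathbf A}$, pass this to the weak limit $\lambda$, and conclude via Glasner's characterization of $E_\mu(X,T)$ as the non-diagonal support of $\lambda$ together with the result of~\cite{BGH}. The only difference is that you spell out the portmanteau inequality and the support argument, which the paper leaves implicit.
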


\begin{proof}
Let $\mu$ be an ergodic measure on $X$. In
the notation of the `construction $\CC$', for every $n$, the measure $\nu_n$
is concentrated on the closed set $\overline{\mathbf A}$, and so is the weak
limit $\lambda$ of the sequence $(\nu_n)$.
By Glasner's result $E_\mu(X,T)\subset \overline{\mathbf A}$.
As this is true for any ergodic $\mu$, the result of~\cite{BGH} quoted above
gives the conclusion.
\end{proof}

\begin{corollary}
           \label{cor:K}
If $(X,T)$ admits an invariant measure $\mu$ of full support such
that $(X,\CB,T,\mu)$ is a $K$-system, then asymptotic pairs are dense
in $X\times X$.
\end{corollary}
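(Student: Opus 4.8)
The plan is to run the ``construction $\CC$'' for the given measure $\mu$ and to exploit the defining property of a $K$-system, namely that its Pinsker $\sigma$-algebra is trivial. First I would recall that $(X,\CB,T,\mu)$ being a $K$-system means that $\Pi_\mu$ consists only of sets of $\mu$-measure $0$ or $1$. Consequently, for every $A\in\CB$ the conditional expectation $\E(\one_A\mid\Pi_\mu)$ equals the constant $\mu(A)$ almost everywhere. Plugging this into Definition~\ref{def:squares} for the measure $\lambda=\mu\ttimes_{\Pi_\mu}\mu$ gives
$$
\lambda(A\times B)=\int\E(\one_A\mid\Pi_\mu)(x)\,\E(\one_B\mid\Pi_\mu)(x)\,d\mu(x)=\mu(A)\,\mu(B)
$$
for all $A,B\in\CB$, so that $\lambda$ is simply the product measure $\mu\times\mu$.

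Next I would invoke the construction $\CC$ directly. Each measure $\nu_n$ is concentrated on ${\mathbf A}$, hence on the closed set $\overline{\mathbf A}$; and by Corollary~\ref{cor:conv}~$\imath$) the sequence $(\nu_n)_{n\ge 0}$ converges weakly to $\lambda$. Since a weak limit of measures all carried by one fixed closed set is again carried by that set (apply the portmanteau inequality $\lambda(U)\le\liminf_n\nu_n(U)=0$ to the open complement $U$ of $\overline{\mathbf A}$), I conclude that $\mu\times\mu=\lambda$ is concentrated on $\overline{\mathbf A}$.

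Finally I would use the full-support hypothesis. Because $\mu$ has full support, its topological support is all of $X$, and the support of the product measure is the product of the supports, so $\supp(\mu\times\mu)=X\times X$. A measure of full support assigns strictly positive mass to every nonempty open set and therefore cannot be concentrated on any proper closed subset; combined with the previous paragraph this forces $\overline{\mathbf A}=X\times X$, i.e. ${\mathbf A}$ is dense in $X\times X$. The only step requiring any care is this last one: verifying that full support of $\mu$ yields full support of $\mu\times\mu$ via the product-of-supports identity, and that a full-support measure concentrated on a closed set forces that set to be the whole space. Both facts are elementary, but they are the logical heart of the argument, since everything else reduces to the already-established convergence $\nu_n\to\lambda$ together with the identification $\lambda=\mu\times\mu$ coming from triviality of $\Pi_\mu$.
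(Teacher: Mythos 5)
Your proof is correct, and it is a genuine (if modest) streamlining of the paper's argument. The paper derives the corollary from Proposition~\ref{prop:entropy-pairs}: triviality of $\Pi_\mu$ gives $\lambda=\mu\times\mu$, full support gives $\supp(\lambda)=X\times X$, and then Glasner's characterization of $E_\mu(X,T)$ as the non-diagonal part of $\supp(\lambda)$ yields $E_\mu(X,T)\cup\Delta=X\times X$, which is contained in $\overline{\mathbf A}$ by Proposition~\ref{prop:entropy-pairs} together with the trivial inclusion $\Delta\subset{\mathbf A}$. You reach the same conclusion without ever mentioning entropy pairs: you extract the concentration step that occurs inside the proof of Proposition~\ref{prop:entropy-pairs} --- each $\nu_n$ from construction $\CC$ is carried by the closed set $\overline{\mathbf A}$, hence so is the weak limit $\lambda$ by the portmanteau inequality --- and combine it directly with $\supp(\lambda)=\supp(\mu)\times\supp(\mu)=X\times X$. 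What the paper's detour through Glasner's theorem buys is the stronger statement recorded immediately after the corollary, namely $E(X,T)\cup\Delta=X\times X$ (the Glasner--Weiss result); what your route buys is self-containedness, since only construction $\CC$, Corollary~\ref{cor:conv}, and elementary facts about supports of product measures are needed. One tacit point common to both proofs: construction $\CC$ is set up for \emph{ergodic} measures, so one should note that triviality of $\Pi_\mu$ (which contains the $\sigma$-algebra of invariant sets) forces $\mu$ to be ergodic; this is automatic for a $K$-system but deserves a word.
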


\begin{proof}
For such a measure $\mu$ the Pinsker $\sigma$-algebra
$\Pi_\mu$ is trivial, it follows that $\lambda=\mu\times\mu$,
its support is $X\times X$, and $E_\mu(X,T)\cup\Delta
=X\times X=\overline{\mathbf A}$.
\end{proof}
In this case $E(X,T)\cup\Delta=X\times X$,
as shown in~\cite{GW} by different means.

\section{Li-Yorke pairs and instability in negative times}
                                   \label{sec:Li-Yorke}

\begin{lemma}
                         \label{lemma:ergodic-square-pinsker}
Let $(X,\CB,T,\mu)$ be an ergodic system, and
$\lambda=\mu\ttimes_{\Pi_\mu}\mu$.
Then $(X\times X,\CB\otimes\CB,T\times T,\lambda)$ is ergodic.
\end{lemma}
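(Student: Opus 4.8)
The plan is to recognize $(X\times X,\lambda,T\times T)$ as the relatively independent self-joining of $(X,\CB,T,\mu)$ over its Pinsker factor and to deduce ergodicity from \emph{relative weak mixing} of the extension $X\to\Pi_\mu$. Write $\Pi=\Pi_\mu$ and let $(Y,\nu)$ be the factor system carried by $\Pi$; a factor of an ergodic system is ergodic, so $(Y,\nu)$ is ergodic. Recall from the definition of the conditional square that $\int f(x)g(y)\,d\lambda=\int\E(f\mid\Pi)\,\E(g\mid\Pi)\,d\mu$ for bounded Borel $f,g$. By the mean ergodic theorem, ergodicity of $\lambda$ is equivalent to the convergence of the Ces\`aro correlations to the product of integrals for all real $F,G$ in a family spanning $L^2(\lambda)$, and the product functions $(x,y)\mapsto f(x)g(y)$ form such a family. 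So it suffices to show, for all bounded real $f_1,f_2,g_1,g_2$, that
\[
\frac1N\sum_{n=0}^{N-1}\int_{X\times X}f_1(x)f_2(y)\,g_1(T^nx)\,g_2(T^ny)\,d\lambda(x,y)
\]
converges to $\bigl(\int f_1(x)f_2(y)\,d\lambda\bigr)\bigl(\int g_1(x)g_2(y)\,d\lambda\bigr)$.

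Using the displayed identity and the $T$-invariance of $\Pi$ (so that $\E(g\mid\Pi)\circ T^n=\E(g\circ T^n\mid\Pi)$), the integral inside the sum equals $\int\E(f_1\cdot g_1\circ T^n\mid\Pi)\,\E(f_2\cdot g_2\circ T^n\mid\Pi)\,d\mu$. The key analytic input is that the extension $X\to\Pi$ is relatively weakly mixing, i.e.\ for all bounded $f,g$,
\[
\frac1N\sum_{n=0}^{N-1}\bigl\|\E(f\cdot g\circ T^n\mid\Pi)-\E(f\mid\Pi)\cdot\E(g\mid\Pi)\circ T^n\bigr\|_{L^2(\mu)}^2\longrightarrow 0 .
\]
Granting this, a routine Cauchy--Schwarz estimate (using boundedness to control the $L^2$ norms of the other factor) lets me replace each $\E(f_i\cdot g_i\circ T^n\mid\Pi)$ by $\E(f_i\mid\Pi)\cdot\E(g_i\mid\Pi)\circ T^n$ in the Ces\`aro mean. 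The averages then become $\frac1N\sum_n\int\bigl(\E(f_1\mid\Pi)\E(f_2\mid\Pi)\bigr)\cdot\bigl(\E(g_1\mid\Pi)\E(g_2\mid\Pi)\bigr)\circ T^n\,d\mu$, and since $(Y,\nu)$ is ergodic the mean ergodic theorem on $Y$ gives the limit $\int\E(f_1\mid\Pi)\E(f_2\mid\Pi)\,d\mu\cdot\int\E(g_1\mid\Pi)\E(g_2\mid\Pi)\,d\mu$, which is precisely the desired product of $\lambda$-integrals.

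It remains to prove that $X\to\Pi$ is relatively weakly mixing. Here I would invoke the relative structure dichotomy of Furstenberg and Zimmer: an extension of ergodic systems either is relatively weakly mixing or admits a nontrivial intermediate factor $\CG$ with $\Pi\subset\CG\subseteq\CB$ and $\CG\neq\Pi$ that is a relatively isometric (compact) extension of $\Pi$. A compact extension has zero relative entropy, so by the Abramov--Rokhlin addition formula $h_\mu(\CG,T)=h_\mu(\Pi,T)+h_\mu(\CG\mid\Pi)=0+0=0$; thus $\CG$ would be a zero-entropy factor strictly larger than the Pinsker factor, contradicting the maximality of $\Pi$. Hence no such $\CG$ exists and the extension is relatively weakly mixing, which is exactly the displayed estimate.

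The main obstacle is this last step: the soft parts (mean ergodic theorem, the conditional-square formalism, Cauchy--Schwarz) are routine, whereas relative weak mixing of the Pinsker extension rests on the relative structure theorem together with the vanishing of relative entropy for compact extensions. A more self-contained alternative avoids the structure theorem altogether: the excellent partition of Lemma~\ref{lemma:constr-excellent} exhibits $X$ as relatively Kolmogorov over $\Pi$, since $\bigcap_{k\geq1}T^{-k}\CP^-=\Pi$ by Lemma~\ref{lemma:excellent} while $\CP$ is generating; one may then relativize the classical implication that a $K$-system is mixing to obtain the relative weak mixing estimate directly. I expect the cleanest writeup to use whichever of these two inputs the reader is most willing to grant; in either case the entropy computation is the crux.
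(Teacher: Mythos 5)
Your proof is correct and takes essentially the same route as the paper: the crux in both is the Furstenberg--Zimmer dichotomy (failure of relative weak mixing over $\Pi_\mu$ forces a nontrivial intermediate isometric/compact extension of $\Pi_\mu$) combined with the fact that such extensions add no entropy, contradicting the maximality of the Pinsker factor. The paper is shorter only because it invokes Furstenberg's Theorems 7.5 and 8.2 directly, for which non-ergodicity of $\mu\ttimes_{\Pi_\mu}\mu$ \emph{is} the failure of relative weak mixing, so your Ces\`aro/mean-ergodic-theorem reduction is a correct but redundant unpacking of the equivalence between the two standard formulations.
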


\begin{proof}
Assume that $\lambda$ is not ergodic.
According to Theorems 7.5 and 8.2 in \cite{F1}
there exists a  non-trivial isometric extension of $(X,\Pi_\mu,T,\mu)$
(in the measure-theoretic sense)
which is a factor of $(X,\CB,T,\mu)$.
An ergodic isometric extension is a factor of an ergodic group extension
(Theorem 8.2 in \cite{F1}), thus
an ergodic isometric extension of a $0$-entropy system also has
entropy $0$, and this contradicts the  characterization of
$\Pi_\mu$ as the largest factor of $X$ with entropy $0$.
\end{proof}

In the next Proposition $\nu_0$ is defined as in the `construction $\CC$'
above.

\begin{proposition}
                      \label{prop:bizarre}
Let $(X,T)$ be a topological system, $\mu$ an ergodic
measure  of positive entropy and
$$
\delta=\sup\bigl\{ d(x,y); (x,y)\in E_\mu(X,T)\bigr\}>0\ .
$$
For $\nu_0$-almost every pair $(x,y)\in X\times X$ one has
\begin{gather}
       \label{eq:bizarre+}
\lim_{n\to+\infty} d(T^nx,T^ny)=0 \ ;\\
\liminf_{n\to+\infty}d(T^{-n}x,T^{-n}y)=0
\text{ and }
\limsup_{n\to +\infty} d(T^{-n}x,T^{-n}y)\geq\delta \ ;
      \label{eq:bizarre-}
\end{gather}
in particular $(x,y)$ is a Li-Yorke pair for $T^{-1}$.
\end{proposition}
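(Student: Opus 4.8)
The plan is to analyze the measure $\nu_0 = \mu \ttimes_{\CF} \mu$ from the construction $\CC$ and transfer properties between $\nu_0$ and the ergodic measure $\lambda = \mu \ttimes_{\Pi_\mu} \mu$. The forward-asymptotic statement~\eqref{eq:bizarre+} is immediate: by construction $\nu_0$ is concentrated on $\Delta_\CF \subset {\mathbf A}$, and every pair in ${\mathbf A}$ is asymptotic, so $d(T^n x, T^n y) \to 0$ for $\nu_0$-almost every $(x,y)$. The substance of the proposition is the backward behaviour~\eqref{eq:bizarre-}, for which I would work with $\lambda$ and then pull the conclusion back to $\nu_0$.

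First I would establish the two backward assertions for $\lambda$-almost every pair. For the $\limsup$ bound, fix $\epsilon > 0$ and choose by definition of $\delta$ an entropy pair $(a,b) \in E_\mu(X,T)$ with $d(a,b) > \delta - \epsilon$. By Glasner's result quoted in Subsection~\ref{subsec:appl-entropy-pairs}, $E_\mu(X,T)$ is the set of off-diagonal points in $\supp(\lambda)$, so $(a,b) \in \supp(\lambda)$; hence the open set $U = \{(u,v) : d(u,v) > \delta - \epsilon\}$ has $\lambda(U) > 0$. Since $\lambda$ is $T\times T$-invariant and, by Lemma~\ref{lemma:ergodic-square-pinsker}, ergodic, the backward orbit under $T \times T$ of $\lambda$-almost every pair visits $U$ infinitely often, giving $\limsup_{n\to+\infty} d(T^{-n}x, T^{-n}y) \geq \delta - \epsilon$ a.e. Intersecting over a sequence $\epsilon \to 0$ yields the $\limsup \geq \delta$ claim $\lambda$-a.e. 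For the $\liminf = 0$ assertion, I would apply the same Poincar\'e-recurrence-plus-ergodicity argument to the open sets $\{(u,v) : d(u,v) < 1/m\}$, each of which has positive $\lambda$-measure because $\lambda$ charges every neighbourhood of the diagonal (its projections are $\mu$, so $\lambda(\{d(u,v)<1/m\}) > 0$); ergodicity of $\lambda$ under $T^{-1}\times T^{-1}$ then forces the backward orbit into each such set infinitely often, so $\liminf_{n\to+\infty} d(T^{-n}x,T^{-n}y) = 0$ $\lambda$-a.e.

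The final step is to transfer~\eqref{eq:bizarre-} from $\lambda$ back to $\nu_0$. Here I would use that the event in question is a tail property invariant under $T \times T$: the set $E$ of pairs satisfying both the $\liminf$ and $\limsup$ conditions is $(T\times T)$-invariant, since replacing $(x,y)$ by $(Tx, Ty)$ changes the backward orbit only by finitely many terms and affects neither the $\liminf$ nor the $\limsup$ as $n \to +\infty$. By Corollary~\ref{cor:conv}~$\imath$), $\nu_n \to \lambda$ weakly and $\nu_n = (T\times T)^{-n}\nu_0$; because $E$ is invariant, $\nu_n(E) = \nu_0(E)$ for all $n$, while $\lambda(E) = 1$ from the previous step. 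A limiting argument comparing $\nu_0$ and $\lambda$ on the invariant set $E$—approximating $E$ from outside by invariant open sets and applying Corollary~\ref{cor:conv}~$\imath\imath\imath$)—then gives $\nu_0(E) = 1$. Combined with~\eqref{eq:bizarre+}, this shows $\nu_0$-almost every pair is asymptotic for $T$ yet has backward orbit with $\liminf = 0 < \delta \leq \limsup$, so $(x,y)$ is proximal but not asymptotic for $T^{-1}$, i.e.\ a Li-Yorke pair for $T^{-1}$.

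The main obstacle I anticipate is precisely this last transfer: the set $E$ is defined by a $\limsup$/$\liminf$ condition and is neither open nor closed, so weak convergence $\nu_n \to \lambda$ does not directly control $\nu_0(E)$. The clean way around this is to exploit invariance—reducing the question to invariant open or closed sets where Corollary~\ref{cor:conv}~$\imath\imath$) and~$\imath\imath\imath$) apply—rather than trying to estimate $\nu_0(E)$ head-on. I would expect the delicate bookkeeping to lie in writing $E$ as a countable combination of invariant sets built from the open sets $\{d(u,v) > \delta - \epsilon\}$ and $\{d(u,v) < 1/m\}$ and verifying that the invariance-preserving approximations are compatible with the one-sided inequalities in Corollary~\ref{cor:conv}.
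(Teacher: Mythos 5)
Your outline follows the paper's own proof: \eqref{eq:bizarre+} comes from the concentration of $\nu_0$ on $\Delta_\CF\subset{\mathbf A}$, and \eqref{eq:bizarre-} from ergodicity of $\lambda$ (Lemma~\ref{lemma:ergodic-square-pinsker}), Glasner's description of $\supp(\lambda)$, and the comparison between $\nu_0$ and $\lambda$ in Corollary~\ref{cor:conv}. Two steps, however, do not stand as written. The justification that $\lambda\bigl(\{(u,v):d(u,v)<1/m\}\bigr)>0$ ``because the projections of $\lambda$ are $\mu$'' is a non-sequitur: a measure on $X\times X$ with both marginals equal to $\mu$ need not charge any neighbourhood of the diagonal (take $\mu$ Lebesgue on the circle and the image of $\mu$ under $x\mapsto(x,x+\tfrac12)$; it gives measure $0$ to $\{d<\tfrac14\}$). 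The fact you need is true, but for a structural reason: by the same Glasner result you invoke for the $\limsup$ part, $\supp(\lambda)=E_\mu(X,T)\cup S(\mu)$ where $S(\mu)=\{(x,x):x\in\supp(\mu)\}$, so the open set $\{d<1/m\}$ meets $\supp(\lambda)$ and hence has positive $\lambda$-measure; alternatively, Cauchy--Schwarz gives $\lambda(B\times B)=\int\E(\one_B\mid\Pi_\mu)^2\,d\mu\geq\mu(B)^2$ for any ball $B$ with $\mu(B)>0$. This is exactly why the paper works with the diagonal point $(z_0,z_0)\in S(\mu)$.

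The second soft spot is the transfer step, which you frame as ``approximating $E$ from outside by invariant open sets.'' As a general mechanism this fails: for an invariant Borel set, the intersection of all invariant open supersets can be strictly larger than the set, so knowing $\nu_0(U)=1$ for every such $U$ gives no lower bound on $\nu_0(E)$. What makes the argument work---and this is precisely the paper's $U_M$/$V$ construction---is that $E$ is \emph{exactly} a countable intersection of forward-invariant open sets: for any open $W$, the set of pairs whose backward orbit visits $W$ infinitely often equals $\bigcap_{M\geq0}U_M$ with $U_M=\bigcup_{n\geq M}(T\times T)^nW$, where each $U_M$ is open and satisfies $(T\times T)U_M=U_{M+1}\subset U_M$; and $E$ is the intersection of these sets over the countable family $W=\{d>\delta-1/j\}$, $W=\{d<1/m\}$. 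Ergodicity of $\lambda$ gives $\lambda(U_M)=1$ whenever $\lambda(W)>0$, Corollary~\ref{cor:conv}~$\imath\imath\imath$) then gives $\nu_0(U_M)=1$, and the countable intersection yields $\nu_0(E)=1$. Your closing paragraph gestures at exactly this decomposition, so the obstacle you flag is not a flaw in the approach but an execution point: the argument must be run as an inner, countable-intersection description of $E$ by forward-invariant open sets of full $\lambda$-measure, not as an outer approximation.
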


\begin{proof}
  Let $U$ be an open set in $X\times X$, with $\lambda(U)>0$.
  For every $M\geq 0$ we write
$$
U_M=\bigcup_{m\geq M}(T\times T)^mU\ .
$$
$U_M$ is open, and
$(T\times T)U_M=U_{M+1}\subset U_M$.
Moreover, $\lambda(U_M)\geq\lambda(U)>0$. By ergodicity of
$\lambda$, $\lambda(U_M)=1$.
By Corollary~\ref{cor:conv} $\imath\imath\imath$),
$\nu_0(U_M)\geq\lambda(U_M)=1$.
Let
$$
V=\bigcap_{M\geq 0}U_M=\bigcap_{M\geq 0}\bigcup_{m\geq M}
(T\times T)^mU\ .
$$
$V$ is invariant by $T\times T$, and $\nu_0(V)=1$.

For every integer $r>1$, we can cover $\supp(\lambda)$
by a finite number of open balls of radius $1/r$, each of them intersecting
$\supp(\lambda)$. Taking the union of all
these families we obtain a sequence $(U_k)_{k\geq 1}$ of open
sets, with $U_k\cap\supp(\lambda)\neq\emptyset$ for all $k$; each
point of $\supp(\lambda)$ belongs to $U_k$ for infinitely many values
of $k$; the diameter of $U_k$ tends to $0$ as $k\to\infty$. To each $k$
we associate a set $V_k$ as above, and write
$G=\bigcap_{k\geq 1}V_k$. We have $\nu_0(G)=1$.

Let $(x,y)$ be a point in $G$.
For each $k$,  $(T\times T)^{-n}(x,y)\in U_k$ for infinitely many values of
$n$, thus the negative orbit of $(x,y)$ is dense in $\supp(\lambda)$.

By Glasner's result \cite{G}, $\supp(\lambda)=E_\mu(X,T)\cup S(\mu)$, where
$S(\mu)=\{(x,x);x\in\supp(\mu)\}$. Thus  we can choose a pair
$(x_0,y_0)$ in $E_{\mu}(X,T)$ with $d(x_0,y_0)=\delta$ and another pair
$(z_0,z_0)$ in $S(\mu)$. It follows that for all
$(x,y)\in G$ both $(x_0,y_0)$ and $(z_0,z_0)$ are in the closure of the
negative orbit of $(x,y)$, thus
$\limsup_{n\to+\infty}d(T^{-n}x,T^{-n}y)\geq\delta$
and $\liminf_{n\to+\infty}d(T^{-n}x,T^{-n}y)=0$.
Finally, every pair $(x,y)\in G$ satisfies Eq.~\eqref{eq:bizarre-}

Recall that $\nu_0$ is concentrated on $\Delta_\CF$, that is,
$\nu_0(\Delta_\CF)=1$, and that every pair in $\Delta_\CF$ is positively
asymptotic. Thus $\nu_0(\Delta_\CF\cap G)=1$, and every pair in this
set satisfies Eq.~\eqref{eq:bizarre+}.
\end{proof}

\begin{remark}
Assume that $\mu$ is a weakly mixing invariant measure on $X$,
different from a Dirac measure.
Then the same argument as in the proof of Proposition
\ref{prop:bizarre} shows that there exists a $G_\delta$-set $G$ of
$X\times X$,
invariant under $T\times T$, dense in $\supp(\mu)\times\supp(\mu)$,
with $\mu\times\mu(G)=1$
and such that every pair $(x,y)\in G$ is Li-Yorke.
More precisely, there exists $\delta>0$ such that
for every $(x,y)\in G$
$$
\liminf_{n\to +\infty}d(T^nx,T^ny)=0\text{ and }
\limsup_{n\to+\infty} d(T^nx,T^ny)\geq\delta\ .
$$
Here no assumption of positive entropy is needed.
This is related to Iwanik's result on independent sets in topologically weakly
mixing systems   \cite{I}. \end{remark}

\section{There are uncountably many asymptotic pairs}
                                    \label{sec:uncountable}

Up to now most of the results were existence results: we have shown
that a system of positive entropy has asymptotic pairs, and even
pairs which are asymptotic for positive times and Li-Yorke for
negative times. It is interesting to know how large a stable class is, and in
particular whether it can be countable. We prove that the answer is
negative for a.e. class. We need more probabilistic tools.

\subsection{Conditional measures.}
                       \label{subsec:conditional}

Here $X$ is a compact metric space, endowed with its Borel
$\sigma$-algebra $\CB$. Let $\CM(X)$ be the set of probability
measures on $X$, endowed with the topology of weak convergence.  It is
a compact metrizable space.
A proof of the next result can be found in~\cite{F2}.

\begin{lemma}
                     \label{lemma:regular}
Let $\mu$ be a probability measure on $X$, and $\CF$ be a
sub-$\sigma$-algebra of $\CB$. There exists a map
$x\mapsto\mu_x$ from $X$ to $\CM(X)$, measurable with respect to
$\CF$, and such that for every bounded function $f$ on $X$
\begin{equation}
     \label{eq:cond-meas}
\E(f\mid \CF)(x)=\int f(y)\,d\mu_x(y)\ \text{ for } \mu\text{-a.e. } x .
\end{equation}
This map is called a \emph{regular version} of the conditional
probability.
\end{lemma}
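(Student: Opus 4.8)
The plan is to construct the map $x\mapsto\mu_x$ by pinning down its action on a countable dense family of continuous functions and then invoking the Riesz representation theorem pointwise. First I would fix a countable $\Q$-linear subspace $D\subset\CC(X)$ that is dense in $\CC(X)$ for the uniform norm and contains the constant function $\one$; such a family exists because $X$ is compact metric, so $\CC(X)$ is separable. For each $f\in D$ I choose one version of the conditional expectation $\E(f\mid\CF)$. Since the defining properties of conditional expectation --- $\Q$-linearity, positivity ($f\geq 0\Rightarrow\E(f\mid\CF)\geq 0$ a.e.), the normalization $\E(\one\mid\CF)=1$ a.e., and the bound $|\E(f\mid\CF)|\leq\|f\|_\infty$ a.e. --- are each countably many almost-everywhere statements, there is a single $\CF$-measurable $\mu$-null set $N$ outside which all of them hold simultaneously for all $f,g\in D$ and all rational scalars.

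Next, for $x\notin N$ the assignment $f\mapsto\E(f\mid\CF)(x)$ defines a positive $\Q$-linear functional on $D$, bounded by $\|f\|_\infty$ and sending $\one$ to $1$; by uniform density it extends uniquely to a positive normalized linear functional on $\CC(X)$. The Riesz representation theorem then produces a unique probability measure $\mu_x\in\CM(X)$ with $\int f\,d\mu_x=\E(f\mid\CF)(x)$ for every $f\in D$. For $x\in N$ I simply set $\mu_x=\mu$. Measurability of $x\mapsto\mu_x$ with respect to $\CF$ is then automatic: the Borel structure of $\CM(X)$ (with the weak topology, $\CM(X)$ being compact metrizable) is generated by the evaluation maps $\nu\mapsto\int f\,d\nu$ with $f$ ranging over $D$, and each composite $x\mapsto\int f\,d\mu_x$ agrees off $N$ with the $\CF$-measurable function $\E(f\mid\CF)$.

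It then remains to promote the identity \eqref{eq:cond-meas} from $D$ to all bounded Borel $f$. For a general continuous $f$, approximating uniformly by elements of $D$ and using that $\|\E(g\mid\CF)\|_\infty\leq\|g\|_\infty$ gives $\int f\,d\mu_x=\E(f\mid\CF)(x)$ for $\mu$-a.e.\ $x$. Finally I would run a functional monotone-class argument: the collection of bounded Borel $f$ for which \eqref{eq:cond-meas} holds $\mu$-a.e.\ is a vector space, contains $\CC(X)$, and is closed under bounded monotone pointwise limits (monotone convergence applies to $\int f_k\,d\mu_x$ for each fixed $x$, and to $\E(f_k\mid\CF)$ by the conditional monotone convergence theorem), hence it is all of the bounded Borel functions.

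The main obstacle to watch is the uniformity of the exceptional null set: the construction works only because the algebraic relations are imposed on a countable family, so a single null set $N$ suffices to make $f\mapsto\E(f\mid\CF)(x)$ a bona fide positive normalized functional for every $x\notin N$ at once. Once that is secured the Riesz step is purely pointwise and the passage to bounded Borel functions is routine, the one mild care being that in the monotone-class step the exceptional set is allowed to depend on $f$, exactly as the statement permits.
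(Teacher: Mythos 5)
Your proof is correct. Note that the paper itself gives no argument for this lemma: it simply cites Furstenberg's book \cite{F2}, and the construction you give --- choose a countable $\Q$-linear dense subspace of $\CC(X)$, fix versions of the conditional expectations, discard a single $\CF$-measurable null set so that positivity, $\Q$-linearity and normalization hold simultaneously, apply the Riesz representation theorem pointwise, and then pass from $\CC(X)$ to all bounded Borel functions by a functional monotone-class argument --- is precisely the standard proof found in that reference. The only point worth making explicit is in the last step: the functional monotone-class theorem applies because $\CC(X)$ is closed under multiplication and generates the Borel $\sigma$-algebra of the compact metric space $X$; with that remark your argument is complete.
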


  We continue to use the notation of this Lemma.

By definition of the conditional square
(see Sec.~\eqref{subsec:squares-background})
the equality

\begin{equation}
        \label{eq:int-cond-meas}
\mu\ttimes_\CF\mu(K)=\int \mu_x\otimes\mu_x(K)\,d\mu(x)
\end{equation}
holds whenever  $K=A\times B$ where $A,B$ are Borel sets in $X$.
By standard arguments, it holds for every Borel subset $K$ of
$X\times X$. Thus for every bounded Borel function $f$ on
$X\times X$ one has
\begin{equation}\label{eq:int-cond-funct}
\int f(x,y)\,d(\mu\ttimes_\CF\mu)(x,y)
=\int\Bigl(\int f(x,y)\,d\mu_x(y)\Bigr)\,d\mu(x)\ .
\end{equation}

We establish now a condition for the measure $\mu_x$ to be atomless
$\mu$-almost-everywhere.  It is easy to check that the
function $(x,y)\mapsto\mu_x(\{y\})$ is Borel, thus the set
$\{x\in X; \mu_x\text{ is atomless}\}$ is measurable.

\begin{lemma}
       \label{lemma:diagonal}
    Let $\Delta$ be the diagonal of $X\times X$. Then
$\mu\ttimes_\CF\mu(\Delta)=0$ if and only if $\mu_x$ is atomless
for $\mu$-almost all $x\in X$.
\end{lemma}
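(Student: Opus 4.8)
The plan is to reduce the statement about $\mu\ttimes_\CF\mu(\Delta)$ to a pointwise statement about the conditional measures $\mu_x$, using the disintegration formula \eqref{eq:int-cond-meas}. First I would apply that formula with $K=\Delta$, which is legitimate since $\Delta$ is a closed (hence Borel) subset of the separable metric space $X\times X$, so that $\Delta\in\CB\otimes\CB$. This gives
$$
\mu\ttimes_\CF\mu(\Delta)=\int \mu_x\otimes\mu_x(\Delta)\,d\mu(x)\ .
$$

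The heart of the argument is the elementary identity, valid for an arbitrary probability measure $\nu$ on $X$,
$$
\nu\otimes\nu(\Delta)=\int\nu(\{y\})\,d\nu(y)=\sum_{a}\nu(\{a\})^2\ ,
$$
where the last sum runs over the (at most countable) set of atoms $a$ of $\nu$. I would obtain the first equality by Fubini, computing the $x$-section of $\Delta$ at height $y$ to be $\{y\}$ and then integrating $y\mapsto\nu(\{y\})$; the second equality follows since $y\mapsto\nu(\{y\})$ vanishes off the atoms and equals $\nu(\{a\})$ at each atom $a$. In particular $\nu\otimes\nu(\Delta)\geq 0$, with equality precisely when $\nu$ has no atom. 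Applying this with $\nu=\mu_x$ shows that the integrand $\mu_x\otimes\mu_x(\Delta)$ is non-negative and vanishes exactly for those $x$ such that $\mu_x$ is atomless.

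With these two facts in hand the conclusion is immediate: since the integrand is non-negative, the integral $\mu\ttimes_\CF\mu(\Delta)$ is zero if and only if $\mu_x\otimes\mu_x(\Delta)=0$ for $\mu$-almost every $x$, that is, if and only if $\mu_x$ is atomless for $\mu$-almost every $x$. Both implications of the equivalence follow at once.

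The step I expect to require the most care is the measurability bookkeeping needed to justify applying \eqref{eq:int-cond-meas} to the non-product set $\Delta$ and to run the Fubini computation: one must use that $(x,y)\mapsto\mu_x(\{y\})$ is Borel (already noted in the text just before the statement) and that $y\mapsto\mu_x(\{y\})$ is $\mu_x$-integrable with integral equal to $\mu_x\otimes\mu_x(\Delta)$. The genuinely probabilistic content is light; the proof is essentially the observation that the diagonal mass of a product measure counts the squared masses of its atoms.
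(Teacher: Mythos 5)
Your proof is correct, and on the harder implication it is genuinely more direct than the paper's. Both you and the paper start from the same identity $\mu\ttimes_\CF\mu(\Delta)=\int\mu_x\otimes\mu_x(\Delta)\,d\mu(x)$, i.e.\ the extension of \eqref{eq:int-cond-meas} to arbitrary Borel sets. You then expand the integrand itself by Fubini, $\mu_x\otimes\mu_x(\Delta)=\int\mu_x(\{y\})\,d\mu_x(y)=\sum_a\mu_x(\{a\})^2$, so that it is non-negative and vanishes precisely when $\mu_x$ is atomless; both implications then follow at once from the fact that a non-negative measurable function has zero integral if and only if it vanishes a.e. The paper instead rewrites the integral as $\int\mu_x(\{x\})\,d\mu(x)$ (in effect \eqref{eq:int-cond-funct} with $f=\one_\Delta$). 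That makes the ``if'' direction immediate, but the converse becomes nontrivial: $\mu_x(\{x\})=0$ a.e.\ only says that $\mu_x$ has no atom at the base point $x$, not that it is atomless. To close this gap the paper invokes Lemma~\ref{lemma:squares}~$\imath\imath\imath$) applied to the $\CF$-measurable map $x\mapsto\mu_x$, giving $\mu_x=\mu_y$ for $\mu\ttimes_\CF\mu$-a.e.\ $(x,y)$, hence $\mu_y(\{x\})=0$ a.e., and then integrates a second time via \eqref{eq:int-cond-funct} to reach $\sum_z\mu_y(\{z\})^2=0$ for a.e.\ $y$ --- which is exactly the quantity your argument works with from the outset. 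So your route trades the paper's two-step transfer argument for a single pointwise Fubini computation, and it does not need Lemma~\ref{lemma:squares}~$\imath\imath\imath$) at all; the price, as you rightly note, is only the measurability bookkeeping (Borel measurability of $(x,y)\mapsto\mu_x(\{y\})$ and of $x\mapsto\mu_x\otimes\mu_x(\Delta)$), which the paper already supplies through the remark preceding the lemma and the ``standard arguments'' extending \eqref{eq:int-cond-meas} from product sets to all Borel sets.
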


\begin{proof}
  We write $\nu=\mu\ttimes_\CF\mu$.

By Fubini's Theorem and Eq.~\eqref{eq:int-cond-meas},
$$
\nu(\Delta)=\int \mu_x\otimes\mu_x(\Delta)\,d\mu(x)
=\int\mu_x(\{x\})\,d\mu(x)\ .
$$
The `if' part of the Lemma is now immediate. Now assume that
$\nu(\Delta)=0$. One has
\begin{equation}
     \label{eq:mux0}
\mu_x(\{x\})=0\text{ for $\mu$-almost all }x\ .
\end{equation}

  As the map $x\mapsto\mu_x$ is $\CF$-measurable,
it follows from Lemma~\ref{lemma:squares}~$\imath\imath\imath$)
  that $\mu_x=\mu_y$ for $\nu$-almost all
$(x,y)$, thus
\begin{equation}
     \label{eq:eqmuxy}
\mu_x(\{x\})=\mu_y(\{x\}) \text{ for $\nu$-almost all }(x,y)\ .
\end{equation}
As the first projection of $\nu$ on $X$ is $\mu$, it follows
from Eqs.~\eqref{eq:mux0} and~\eqref{eq:eqmuxy} that
\begin{equation}
         \label{eq:muxy0}
  \mu_y(\{x\})=0 \text{ for $\nu$-almost all }(x,y)\ .
\end{equation}
Using Eq.~\eqref{eq:int-cond-funct} with $f(x,y)=\mu_y(\{x\})$ one gets
$$
  0=\int \mu_y(\{x\})\,d\nu(y)=
  \int\Bigl(\int \mu_y(\{x\})\,d\mu_y(x)\Bigr)\,d\mu(y)\ .
$$
Hence $\int\mu_y(\{x\})\,d\mu_y(x)=0$ for $\mu$-almost all $y$.

But for all $y$ the measure $\mu_y$ is larger than its discrete part
$\tau_y=\sum_z\mu_y(\{z\})\delta_z$, where $\delta_z$ is the Dirac
mass at $z$, and for $\mu$-almost all $y$ we have
$$
0=\int\mu_y(\{x\})\,d\tau_y(x)=\sum_z(\mu_y(\{z\}))^2
$$
thus $\mu_y(\{z\})=0$ for all $z$ and $\mu_y$ is atomless.
\end{proof}

\subsection{Application to asymptotic pairs.}
The next result is a topological counterpart of
Proposition~\ref{prop:bizarre}.

\begin{proposition}
     \label{prop:ouf}
Assume that $h(X,T)>0$. There exist $\delta>0$,
an uncountable subset $F$
of $X$, and for every $x\in F$ an uncountable subset $F_x$ of
$X$ such that for every $y\in F_x$ the relations~\eqref{eq:bizarre+}
and \eqref{eq:bizarre-} hold, that is,
\begin{gather*}
\lim_{n\to+\infty} d(T^nx,T^ny)=0 \ ;\\
\liminf_{n\to+\infty}d(T^{-n}x,T^{-n}y)=0
\text{ and }
\limsup_{n\to +\infty} d(T^{-n}x,T^{-n}y)\geq\delta \ .
\end{gather*}
Here one can choose any $\delta$ such that
$0<\delta<\sup\{d(x,y);(x,y)\in E(X,T)\}$.
\end{proposition}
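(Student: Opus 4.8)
The plan is to upgrade the measure-theoretic statement of Proposition~\ref{prop:bizarre} into a genuinely topological, uncountability statement. Proposition~\ref{prop:bizarre} produces a set $G$ with $\nu_0(G)=1$ (and, after intersecting with $\Delta_\CF$, a set $G'=\Delta_\CF\cap G$ of full $\nu_0$-measure) every point of which satisfies both~\eqref{eq:bizarre+} and~\eqref{eq:bizarre-}. So it suffices to show that this full-measure set, viewed through the conditional-measure machinery of Subsection~\ref{subsec:conditional}, forces an uncountable supply of such pairs with uncountably many ``bases'' $x$. The natural route is to use the disintegration $\nu_0=\mu\ttimes_\CF\mu=\int\mu_x\otimes\mu_x\,d\mu(x)$ from Eq.~\eqref{eq:int-cond-meas}, and to prove that the conditional measures $\mu_x$ are atomless for $\mu$-almost every $x$ via Lemma~\ref{lemma:diagonal}.

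Concretely, I would first record that $\CF=\sigma(\CP^-)$ is \emph{not} equal to $\CB$ up to $\mu$-null sets, by Lemma~\ref{lemma:constr-excellent}~$\imath\imath$), since $h_\mu(X,T)>0$. By Lemma~\ref{lemma:squares}~$\imath$) this means $\nu_0=\mu\ttimes_\CF\mu$ is \emph{not} concentrated on the diagonal $\Delta$, so $\nu_0(\Delta)\neq 1$; but actually I need $\nu_0(\Delta)=0$, not merely ${}<1$. To get this I would invoke ergodicity: $\Delta$ is $T\times T$-invariant, and $(X\times X,\nu_0)$ — or rather its limit $\lambda$, which is ergodic by Lemma~\ref{lemma:ergodic-square-pinsker} — lets me argue the diagonal has measure $0$ or $1$; combined with $\nu_0(\Delta)<1$ this yields $\nu_0(\Delta)=0$. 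Then Lemma~\ref{lemma:diagonal} applies and gives: $\mu_x$ is atomless for $\mu$-almost every $x$. In particular, for such $x$ the fibre $\mathrm{supp}(\mu_x)$ is uncountable.

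Next I would intersect with the full-measure ``good'' set. Let $G'\subset\mathbf{A}$ be the $T\times T$-invariant Borel set from Proposition~\ref{prop:bizarre} with $\nu_0(G')=1$, every point of which satisfies~\eqref{eq:bizarre+} and~\eqref{eq:bizarre-}. Writing $\one_{G'}$ through the disintegration~\eqref{eq:int-cond-funct},
\begin{equation*}
1=\nu_0(G')=\int\mu_x\otimes\mu_x(G')\,d\mu(x)=\int\Bigl(\int\one_{G'}(x,y)\,d\mu_x(y)\Bigr)d\mu(x),
\end{equation*}
so $\mu_x\otimes\mu_x(G')=1$ for $\mu$-a.e.\ $x$; fixing such an $x$, the set $F_x=\{y:(x,y)\in G'\}$ has $\mu_x(F_x)=1$. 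Since $\mu_x$ is atomless, $F_x$ is uncountable, and by construction every $y\in F_x$ satisfies the two displayed relations. Taking $F$ to be the (full-measure, hence uncountable) set of admissible bases $x$ completes the construction; the bound $0<\delta<\sup\{d(x,y):(x,y)\in E(X,T)\}$ is inherited from Proposition~\ref{prop:bizarre} together with the inclusion $E_\mu(X,T)\subset E(X,T)$, after choosing an ergodic $\mu$ of positive entropy whose $E_\mu$-diameter exceeds $\delta$ (possible by the density result $E(X,T)=\overline{\bigcup_\mu E_\mu(X,T)}$).

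I expect the main obstacle to be the passage $\nu_0(\Delta)<1\Rightarrow\nu_0(\Delta)=0$, i.e.\ ruling out a positive but non-full diagonal mass. The cleanest argument replaces $\nu_0$ by its weak limit $\lambda=\mu\ttimes_{\Pi_\mu}\mu$, which is ergodic under $T\times T$ (Lemma~\ref{lemma:ergodic-square-pinsker}); on $\lambda$ the invariant set $\Delta$ has measure $0$ or $1$, and $\lambda(\Delta)=1$ would force $\Pi_\mu=\CB$ by Lemma~\ref{lemma:squares}~$\imath$), contradicting $h_\mu>0$, so $\lambda(\Delta)=0$. One then transfers atomlessness from $\lambda$ back to $\nu_0$: because $\CF\supset\Pi_\mu$ and the construction $\CC$ relates the two, the conditional measures for $\CF$ are pushed from those for $\Pi_\mu$, so atomlessness of the $\Pi_\mu$-fibres yields atomlessness of the $\CF$-fibres. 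Handling this transfer carefully — rather than working with $\nu_0$ directly — is the step that requires the most attention, but it follows from the decreasing-algebra structure $\CF_n\downarrow\Pi_\mu$ already set up in the construction $\CC$ together with Lemma~\ref{lemma:diagonal} applied to $\Pi_\mu$.
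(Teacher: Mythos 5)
Your overall route is the paper's own: invoke Proposition~\ref{prop:bizarre}, disintegrate $\nu_0=\mu\ttimes_\CF\mu=\int\mu_x\otimes\mu_x\,d\mu(x)$, prove $\mu_x$ is atomless $\mu$-a.e.\ via Lemma~\ref{lemma:diagonal}, and deduce uncountability of $F_x$ from atomlessness of $\mu_x$ and of $F$ from atomlessness of $\mu$ (which you should justify: $\mu$ is atomless because it is ergodic of positive entropy; ``full measure'' alone does not imply uncountable). The choice of $\delta$ through $E(X,T)=\overline{\bigcup_\mu E_\mu(X,T)}$ also matches the paper. But the step you yourself single out as the crux --- converting $\lambda(\Delta)=0$ into $\nu_0(\Delta)=0$ --- is wrong as you state it, and the error is not cosmetic. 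You claim that since $\Pi_\mu\subset\CF$, ``the conditional measures for $\CF$ are pushed from those for $\Pi_\mu$, so atomlessness of the $\Pi_\mu$-fibres yields atomlessness of the $\CF$-fibres,'' and that this follows from the decreasing-algebra structure $\CF_n\downarrow\Pi_\mu$. The disintegration goes the other way: the coarse conditionals are mixtures of the fine ones, $\mu_x^{\Pi_\mu}=\int\mu_z^{\CF}\,d\mu_x^{\Pi_\mu}(z)$, and an atomless mixture can perfectly well be a mixture of purely atomic measures (extreme case: for $\CF=\CB$ one has $\mu_x^{\CF}=\delta_x$, purely atomic, however atomless the $\Pi_\mu$-conditionals are). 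Worse, the only inequality that the inclusion $\Pi_\mu\subset\CF$ yields by itself goes in the wrong direction: the functional $\nu\mapsto\nu\otimes\nu(\Delta)$ is convex, so Jensen applied to the disintegration above gives $\lambda(\Delta)=\mu\ttimes_{\Pi_\mu}\mu(\Delta)\le\mu\ttimes_\CF\mu(\Delta)=\nu_0(\Delta)$. Hence no soft $\sigma$-algebra argument can close this gap.

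The transfer that does work is dynamical, and it is exactly Corollary~\ref{cor:conv}~$\imath\imath$): the diagonal $\Delta$ is closed and satisfies $(T\times T)\Delta=\Delta$, the measures $\nu_n=(T\times T)^{-n}\nu_0$ converge weakly to $\lambda$ by Corollary~\ref{cor:conv}~$\imath$), and weak convergence plus closedness give $\lambda(\Delta)\ge\limsup_{n}\nu_n(\Delta)=\nu_0(\Delta)$. This uses that the $\CF_n=T^{-n}\CF$ are pullbacks of $\CF$ under the dynamics --- so that the $\nu_n$ are images of $\nu_0$ and $\nu_n(\Delta)$ is constant --- not merely that the $\CF_n$ decrease to $\Pi_\mu$. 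This is how the paper argues in Lemma~\ref{lemma:atomless}, in contrapositive form: if $\mu_x$ had atoms on a set of positive $\mu$-measure, then $\nu_0(\Delta)>0$ by Lemma~\ref{lemma:diagonal}, hence $\lambda(\Delta)\ge\nu_0(\Delta)>0$, hence $\lambda(\Delta)=1$ by ergodicity of $\lambda$ (Lemma~\ref{lemma:ergodic-square-pinsker}), hence $\Pi_\mu=\CB$ up to null sets by Lemma~\ref{lemma:squares}~$\imath$), contradicting $h_\mu(X,T)>0$. Once your transfer step is replaced by this argument, the remainder of your proposal (full-measure fibres $F_x$ via Eq.~\eqref{eq:int-cond-funct}, intersection with the atomless set, uncountability) is correct and coincides with the paper's proof.
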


\begin{proof}Let $D=\sup\{d(x,y);(x,y)\in E(X,T)\}$.
By compactness there exists $(x',y') \in E(X,T)$ with $d(x',y')=D$. Thus for
$0<\delta<D$  there exist an ergodic measure
$\mu$ and a $\mu$-entropy pair $(x_0,y_0)$ close to $(x',y')$ such that
$d(x_0,y_0)\geq \delta$.
Recall that in the notations of the `construction $\CC$'
$\nu_0=\mu\ttimes_\CF\mu$; denote by
$\mu_x$ a regular version of
the conditional probability given $\CF$,
as in Lemma~\ref{lemma:regular}.

\begin{lemma}
           \label{lemma:atomless}
With the assumptions of Proposition~\ref{prop:ouf} for $\mu$-almost every
$x$ the
measure $\mu_x$ is atomless. \end{lemma}

\begin{proof} (of the Lemma)
Assume that the conclusion does not hold.
By Lemma~\ref{lemma:diagonal}, $\nu_0(\Delta)>0$.
As $\Delta$ is invariant under $T\times T$,
by Corollary~\ref{cor:conv}~$\imath\imath$)
$\lambda(\Delta)\geq\nu_0(\Delta)>0$.
By Lemma~\ref{lemma:ergodic-square-pinsker},
$\lambda$ is ergodic for $T\times T$,
thus $\lambda(\Delta)=1$.

By Lemma \ref{lemma:squares} $\imath$)
it means that $\Pi_\mu=\CB$ up to $\mu$-null sets,
thus $h_\mu(X,T)=0$. This is impossible because there exists an
entropy pair $(x_0,y_0)$ for $\mu$.
\end{proof}

We continue the proof of Proposition~\ref{prop:ouf}.
By Proposition~\ref{prop:bizarre}, the
relations~\eqref{eq:bizarre+} and \eqref{eq:bizarre-} hold for
$\nu_0$-almost every $(x,y)\in X\times X$.

For $x\in X$, let  $F_x$ be the set of all points $y\in X$ such that these
relations hold for $(x,y)$. Since $\nu_0=\mu\ttimes_\CF\mu$ one has
$\int\mu_x(F_x)\,d\mu(x)=1$, thus $\mu_x(F_x)=1$ for $\mu$-almost
all $x$. Let
$$F=
\{x\in X;\mu_x(F_x)=1\}\cap\{x\in X;\mu_x\text{ is atomless}\}\ .
$$
Then $\mu(F)=1$.
The measure $\mu$ is ergodic and of positive entropy, thus atomless.
Hence the set $F$ is uncountable.
For $x\in F$, $\mu_x(F_x)=1$ and $\mu_x$ is atomless, therefore
$F_x$ is an uncountable set.
\end{proof}



\begin{thebibliography}{99}

\bibitem{B}
Billingsley,~B.: Probability and Measure. John Wiley \& Sons, New
York, 1995.

\bibitem{BGH} Blanchard,~F., Glasner,~ E., Host,~B.:
\textit{A variation on the variational principle and applications to
entropy pairs}, Ergodic Th. Dyn. Sys. \textbf{17} (1997), no. 1, 29-43.

\bibitem{BGKM} Blanchard,~F., Glasner, E., Kolyada, S., Maass, A.: \textit{A
note on Li-Yorke pairs}, preprint (2000).

\bibitem{BHM} Blanchard,~F., Host,~B., Maass,~A., Mart\'{\i}nez,~S.,
Rudolph,~D.J.: \textit{Entropy pairs for a measure},
Ergodic Th. Dynam. Sys.
\textbf{15} (1995), no. 4, 621-632.

\bibitem{Bl} Blanchard,~F.: \textit{A disjointness theorem
involving topological entropy}, Bull. Soc. Math. France \textbf{121}
(1993), no. 4, 465-478.

\bibitem{D1} Doob,~J.~L.: Stochastic Processes. John
Wiley \& Sons, New York,  1990.

\bibitem{D2} Doob,~J.~L.: Measure Theory. Springer-Verlag,
Berlin-Heidelberg-New York, 1994.

\bibitem{DGS} Denker,~M., Grillenberger,~C., Sigmund,~K.:
\textit{Ergodic Theory on Compact Spaces}, Lecture Notes in Mathematics 527,
Springer-Verlag, Berlin-Heidelberg-New York, 1976.

\bibitem{F1} Furstenberg,~H.:
\textit{Ergodic behaviour of diagonal measures and a theorem of
Szemer\'edi on arithmetic progressions}, J. Analyse Math.
\textbf{31} (1977), 204-256.

\bibitem{F2} Furstenberg,~H.:
Recurrence in Ergodic Theory and Combinatorial number theory.
Princeton University Press, Princeton, 1981.

\bibitem{GW} Glasner,~E., Weiss,~B.: \textit{Strictly ergodic, uniform positive
entropy models}, Bull. Soc. Math. France \textbf{122} (1994), 399-412.

\bibitem{G} Glasner,~E.: \textit{A simple characterization of the set
of $\mu$-entropy pairs and applications},
Israel J. Math. \textbf{102} (1997), 13-27.

\bibitem{HY1} Huang,~W., Ye,~X.D.: \textit{Homeomorphisms with the
whole compacta being scrambled sets}, Ergod. Th. Dynam. Sys., to appear.

\bibitem{HY2} Huang,~W., Ye,~X.D.:
\textit{Devaney's chaos or 2-scattering imply Li-Yorke's chaos},
pre\-print (1999).

\bibitem{I} Iwanik, A.:
Independent sets of transitive points,
\textit{Dynamical systems and ergodic theory (Warsaw, 1986)}, 277-282
Banach Center Publ. \textbf{23}, PWN, Warsaw, 1989.

\bibitem{LY} Li,~T.Y., Yorke,~J.A.:
\textit{Period three implies chaos},
Amer. Math. Monthly \textbf{82} (1975), no. 10, 985-992

\bibitem{P1} Parry,~W.: Entropy and generators in Ergodic Theory. Benjamin, New
York, 1969.

\bibitem{P2} Parry,~W.: Topics in Ergodic Theory.
Cambridge Tracts in Math.,
Cambridge University Press, Cambridge, 1981.

\bibitem{RS} Rohlin,~V.A., Sina\u{\i}, Ya.G.: \textit{Construction and
properties of invariant measurable partitions}, Sov. Math. \textbf{2} (1961),
no. 6, 1611-1614.

\bibitem{R} Rudolph,~D.J.,:
Fundamentals of measurable dynamics, Ergodic theory on Lebesgue spaces.
The Clarendon Press Oxford University Press, New York, 1990.

\bibitem{Wa} Walters,~P.: An
introduction to Ergodic Theory. Springer,
Berlin-Heidelberg-New York, 1982.

\bibitem{We} Weiss,~B.:
\textit{Multiple recurrence and doubly minimal systems},
Contemporary Mathematics, Vol 215, 1998, 189-196.


\bibitem{Y} Ye,~XD.: personal communication.

\end{thebibliography}
\end{document}